\newcommand{\nc}{\newcommand}
\nc{\les}{\lesssim}
\nc{\nit}{\noindent}
\nc{\nn}{\nonumber}
\nc{\D}{\partial}
\nc{\diff}[2]{\frac{d #1}{d #2}}
\nc{\diffn}[3]{\frac{d^{#3} #1}{d {#2}^{#3}}}
\nc{\pdiff}[2]{\frac{\partial #1}{\partial #2}}
\nc{\pdiffn}[3]{\frac{\partial^{#3} #1}{\partial{#2}^{#3}}}
\nc{\abs}[1] {\lvert #1 \rvert}
\nc{\cAc}{{\cal A}_c}
\nc{\cE}{{\cal E}}
\nc{\cF}{{\cal F}}
\nc{\cP}{{\cal P}}
\nc{\cV}{{\cal V}}
\nc{\cQ}{{\cal Q}}
\nc{\cGin}{{\cal G}_{\rm in}}
\nc{\cGout}{{\cal G}_{\rm out}}
\nc{\cO}{{\cal O}}
\nc{\Lav}{{\cal L}_{\rm av}}
\nc{\cL}{{\cal L}}
\nc{\cB}{{\cal B}}
\nc{\cZ}{{\cal Z}}
\nc{\cR}{{\cal R}}
\nc{\cT}{{\cal T}}
\nc{\cY}{{\cal Y}}
\nc{\cX}{{\cal X}}
\nc{\cXT}{{{\cal X}(T)}}
\nc{\cBT}{{{\cal B}(T)}}
\nc{\vD}{{\vec \mathcal{D}}}
\nc{\efield}{\mathcal{E}}
\nc{\mE}{\mathcal{E}}
\nc{\vE}{{\vec \efield}}
\nc{\vB}{{\vec \mathcal{B}}}
\nc{\vH}{{\vec \mathcal{H}}}
\nc{\F}{  \mathcal{F} }
\nc{\ty}{{\tilde y}}
\nc{\tu}{{\tilde u}}
\nc{\tV}{{\tilde V}}
\nc{\Pc}{{\bf P_c}}
\nc{\bx}{{\bf x}}
\nc{\bX}{{\bf X}}
\nc{\bXYZ}{{\bf XYZ}}
\nc{\bY}{{\bf Y}}
\nc{\bF}{{\bf F}}
\nc{\bS}{{\bf S}}
\nc{\dV}{{\delta V}}
\nc{\dE}{{\delta E}}
\nc{\TT}{{\Theta}}
\nc{\dPsi}{{\delta\Psi}}
\nc{\order}{{\cal O}}
\nc{\Rout}{R_{\rm out}}
\nc{\eplus}{e_+}
\nc{\eminus}{e_-}
\nc{\epm}{e_\pm}
\nc{\eps}{\varepsilon}
\nc{\vnabla}{{\vec\nabla}}
\nc{\G}{\Gamma}
\nc{\w}{\omega}
\nc{\mh}{h}
\nc{\mg}{g}
\nc{\vphi}{\varphi}
\nc{\tlambda}{\tilde\lambda}
\nc{\be}{\begin{equation}}
\nc{\ee}{\end{equation}}
\nc{\ba}{\begin{eqnarray}}
\nc{\ea}{\end{eqnarray}}
\nc{\g}{\gamma}
\nc{\ol}{\overline}
\newtheorem{theorem}{Theorem}[section]
\newtheorem{lemma}[theorem]{Lemma}
\newtheorem{prop}[theorem]{Proposition}
\newtheorem{corollary}[theorem]{Corollary}
\nc{\pT}{\partial_T}
\nc{\pz}{\partial_z}
\nc{\pt}{\partial_t}
\nc{\la}{\langle}
\nc{\ra}{\rangle}
\nc{\infint}{\int_{-\infty}^{\infty}}
\nc{\halfwidth}{6.5cm}
\nc{\figwidth}{10cm}
\newcommand{\f}{\frac}
\nc{\nlayers}{L} \nc{\nsectors}{M}
\nc{\indicator}{\mathbf{1}}
\nc{\Rhole}{R_{\rm hole}}
\nc{\Rring}{R_{\rm ring}}
\nc{\neff}{n_{\rm eff}}
\nc{\Frem}{F_{\rm rem}}
\nc{\R}{\mathbb R}
\nc{\C}{\mathbb C}
\nc{\Z}{\mathbb Z}
\nc{\DD}{\Delta}
\nc{\cD}{\mathcal D}
\nc{\lnorm}{\left\|}
\nc{\rnorm}{\right\|}
\nc{\rnormp}{\right\|_{\ell^{p,\eps}}}
\nc{\rar}{\rightarrow}
\nc{\mR}{\mathcal R}
\nc{\oo}{\"o}   
\nc{\os}{\overset{o}}
\begin{document}

\begin{abstract}

	We consider the fourth order Schr\"odinger operator $H=\Delta^2+V(x)$ in three dimensions with real-valued potential $V$.  Let $H_0=\Delta^2$, if $V$ decays sufficiently
	and there are no eigenvalues or resonances in the absolutely continuous spectrum of $H$ then the wave operators $W_{\pm}= s\text{\,--}\lim_{t\to \pm \infty} e^{itH}e^{-itH_0}$ 
	extend to bounded operators on $L^p(\mathbb R^3)$ for all $1<p<\infty$.

\end{abstract}

\title[Wave Operators for fourth order Schr\"odinger]{On the $L^p$ boundedness of the Wave Operators for fourth order Schr\"odinger operators}

\author[Goldberg, Green]{Michael Goldberg and William~R. Green}
\thanks{The  first  author  is  supported  by  Simons  Foundation  Grant  635369. The second author is supported by  Simons  Foundation  Grant  511825. }
\address{Department of Mathematics\\
	University of Cincinnati \\
	Cincinnati, OH 45221 U.S.A.}
\email{goldbeml@ucmail.uc.edu}
 \address{Department of Mathematics\\
Rose-Hulman Institute of Technology \\
Terre Haute, IN 47803, U.S.A.}
\email{green@rose-hulman.edu}


\maketitle

\section{Introduction}

Let $H_0=\Delta^2$ be the free fourth order Schr\"odinger operator on $L^2(\R^3)$.  Let $H:=\Delta^2+V$ for a real-valued, decaying potential $V(x)$.
The wave operators are defined by
$$
	W_{\pm}=s\text{\ --}\lim_{t\to \pm \infty} e^{itH}e^{-itH_0}.
$$
We investigate the $L^p(\R^3)$ boundedness of the wave operators. One use of the $L^p$ boundedness of the wave operators is due to the intertwining identity:
\begin{align}\label{eq:intertwining}
	f(H)P_{ac}(H)=W_\pm f(\Delta^2)W_{\pm}^*.
\end{align}
Here $f$ may be any Borel function and $P_{ac}(H)$ is projection onto the absolutely continuous spectral subspace of $H$.  Using \eqref{eq:intertwining} one may obtain $L^p$-based mapping properties for the more complicated, perturbed operator $f(H)P_{ac}(H)$ from the simpler free operator $f(\Delta^2)$.  The intertwining holds for the class of potentials $V$ which we consider.  The boundedness of the wave operators on $L^p(\R^3)$ for any choice of $p\geq 2$ with the function $f(\cdot)=e^{-it(\cdot)}$ yield the dispersive estimate
\begin{align}
	\|e^{-itH}P_{ac}(H)\|_{L^{p'}\to L^p}\les |t|^{-\frac{3}{4}+\frac{3}{2p} },
\end{align}
where $p'$ is the H\"older conjugate of $p$.  The intertwining identity is a consequence of the asymptotic completeness of the wave operators, \cite{RS3}.  The intertwining identity can also be established for very general operators, \cite{Hor}, without establishing asymptotic completeness.  

The recent work on dispersive estimates for the fourth order operators, \cite{GT4,egt,GG4} which was inspired by the weighted $L^2$ results in \cite{fsy}, suggest that the wave operators should be bounded for a non-trivial range of $p$, which should depend on the regularity of the threshold.  We say that zero energy is regular if there are no non-trivial solutions to $H\psi =0$ with $\psi \in L^\infty(\R^3)$.  Throughout the paper we denote $\la x\ra :=(1+|x|^2)^{\f12}$.

\begin{theorem}\label{thm:main}
	
	If $|V(x)|\les \la x\ra^{-\beta}$ for some $\beta>9$, and there are no embedded eigenvalues in the spectrum of $H$ and that zero energy is regular, then the wave operators $W_\pm$ are bounded on $L^p(\R^3)$ for all $1<p<\infty$.
	
\end{theorem}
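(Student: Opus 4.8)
The plan is to use the stationary representation of the wave operators together with a resolvent expansion near the threshold. Writing $W_+$ (the argument for $W_-$ is symmetric) via the standard formula
\begin{align*}
W_+ f = f - \frac{1}{2\pi i}\int_0^\infty R_V^+(\lambda)\,V\,[R_0^+(\lambda)-R_0^-(\lambda)]\,f\,d\lambda,
\end{align*}
where $R_0^\pm(\lambda) = (\Delta^2 - \lambda \mp i0)^{-1}$ and $R_V^\pm$ are the perturbed resolvents, one changes variables $\lambda \to \lambda^4$ (so that the fourth-order resolvent kernels factor through the second-order ones) and uses the symmetric resolvent identity $R_V^\pm = R_0^\pm - R_0^\pm v M^\pm(\lambda)^{-1} v R_0^\pm$ with $v = |V|^{1/2}$, $U = \mathrm{sgn}(V)$, and $M^\pm(\lambda) = U + v R_0^\pm(\lambda) v$. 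The key structural fact to exploit is that $R_0^\pm$ for $\Delta^2$ in $\R^3$ decomposes as a sum of (a rescaled) free Schr\"odinger resolvent $(-\Delta - \lambda^2)^{-1}$ and a resolvent of $(-\Delta + \lambda^2)^{-1}$; the latter is a nice, exponentially localized, smoothing operator and contributes a benign (even Hörmander-class / Calderón–Zygmund) piece, so the analysis reduces to controlling the Schrödinger-like part, exactly where the regularity-of-zero hypothesis enters.

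The steps, in order, would be: (1) establish the low-energy expansion of $M^\pm(\lambda)^{-1}$ near $\lambda = 0$ under the assumption that zero is regular — this is where the decay $\beta > 9$ is consumed, and it yields that $M^\pm(\lambda)^{-1}$ is a bounded operator between suitable weighted spaces with controlled $\lambda$-derivatives, with no singular (resonance/eigenvalue) terms; (2) establish the high-energy bound, using the absence of embedded eigenvalues plus a limiting absorption principle for $H$ at positive energies, to get $L^p$ boundedness of the high-energy part of $W_+$ — here one typically uses that for large $\lambda$ the Born series $\sum_n R_0^\pm (VR_0^\pm)^n$ converges in appropriate weighted spaces, with each term amenable to an oscillatory-integral $L^p$ bound; (3) for the low-energy part, insert the expansion from (1) into the integral and reduce the $L^p \to L^p$ bound of each resulting operator to kernel estimates of the form $\int_0^\infty \lambda^3 e^{i\lambda^4 t}\,(\text{localized smooth factors})\,d\lambda$, which after the change of variables become manageable oscillatory integrals handled by stationary phase / integration by parts, and whose spatial kernels can be dominated by $L^1$-type kernels or recognized as bounded on $L^p$ via a $TT^*$ or Stein–Weiss / Schur-test argument; (4) patch the low- and high-energy pieces with a smooth cutoff $\chi(\lambda)$, checking that the commutator terms from the cutoff are lower order.

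The main obstacle I expect is step (1) combined with step (3): controlling the low-energy contribution uniformly down to $\lambda = 0$. The fourth-order free resolvent in $\R^3$ has a kernel that, unlike the second-order case, does not vanish at the threshold — $R_0^\pm(0)$ is a nonzero convolution operator (roughly $c|x-y|^{-1}$ up to the good part) — so the threshold behavior of $M^\pm(\lambda)^{-1}$ is genuinely more delicate than for $-\Delta + V$, and one must track several orders in the $\lambda$-expansion, show the leading operators are invertible on the orthogonal complement of the (here trivial) resonance/eigenspace, and then prove that the resulting singular-in-$\lambda$ factors, once integrated against the oscillatory kernel, still produce an $L^p$-bounded operator for the full range $1 < p < \infty$. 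Getting the endpoint behavior as $p \to 1$ and $p \to \infty$ — i.e., showing there is no loss there — will require the kernels to be essentially Calderón–Zygmund-type after the leading (convolution, hence Hörmander-multiplier) terms are peeled off, and verifying that cancellation is the crux of the argument.
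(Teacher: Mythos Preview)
Your overall architecture matches the paper's: stationary representation, change to the spectral variable $\lambda$ (so that the integral reads $\int_0^\infty \lambda^3 R_V^+(\lambda^4)V[R^+-R^-](\lambda^4)\,d\lambda$), a low/high energy split via a cutoff $\chi(\lambda)$, the symmetric resolvent identity $R_V^+ = R^+ - R^+ v (M^+)^{-1} v R^+$, an expansion of $(M^+)^{-1}$ near $\lambda=0$ for the low-energy piece, and the limiting absorption principle for the high-energy piece. That is exactly the route taken.

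There is, however, a factual slip and a genuine gap in the low-energy analysis. First, the free fourth-order resolvent in $\R^3$ does not limit to a $c|x-y|^{-1}$ kernel at $\lambda=0$: from $R^\pm(\lambda^4)(x,y)=\frac{1}{2\lambda^2}\big(\frac{e^{\pm i\lambda|x-y|}}{4\pi|x-y|}-\frac{e^{-\lambda|x-y|}}{4\pi|x-y|}\big)$ one gets a simple \emph{pole} at $\lambda=0$ with residue $a^\pm=\frac{1\pm i}{8\pi}$, and the $\lambda$-independent part of the expansion is $G_0(x,y)=-\frac{|x-y|}{8\pi}$, which grows rather than decays. This changes the structure of $M^+(\lambda)$: its expansion begins with $\frac{a^+\|V\|_1}{\lambda}P$ (where $P$ is projection onto $v$), and the paper's expansion of the inverse reads $(M^+)^{-1}=QD_0Q+\lambda M_1+M_2(\lambda)$ with $Q=\mathbbm 1-P$.

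The gap is in how you propose to handle the resulting pieces. The $QD_0Q$ term is controlled in the paper not by generic CZ/Schur estimates but by exploiting the orthogonality $Qv=0$, which allows one to subtract a $z$-independent function and gain a factor $\langle z\rangle/|x-z|$ in the kernel (Lemma~3.5 and the functions $F^\pm,G^\pm$). More seriously, the $\lambda M_1$ term is the one that forces $1<p<\infty$ rather than $1\le p\le\infty$, and it is \emph{not} amenable to a Schur test or a standard Calder\'on--Zygmund argument: after reducing to a model operator, its kernel is essentially $\frac{4i|x|}{|x|^4-|y|^4}$ restricted to $\big||x|-|y|\big|>1$. The paper handles this by passing to radial variables $r=|y|^4$, $s=|x|^4$, recognizing a truncated Hilbert transform plus a Hardy--Littlewood maximal function, and invoking boundedness of both on $L^p(s^{(p-1)/4}\,ds)$ via $A_p$ weight theory (Lemma~3.2). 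Your sketch of ``stationary phase / Schur / CZ'' does not anticipate this singular-integral step, and without it the low-energy bound will not close for the full range.
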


The $L^p$ boundedness of the wave operators for the (second-order) Schr\"odinger operators is a well-studied problem with a rich literature,
see \cite{YajWkp1,YajWkp2,YajWkp3,JY2,DF,Miz}.  These results span all dimensions $n\geq 1$ when the threshold energy is regular.  In three spatial dimensions, detailed structure formulas have been obtained for the wave operators, \cite{Bec,BS,BS2}.  The effect of threshold obstructions has been studied \cite{Yaj,FY,YajNew,JY4,GGwaveop,YajNew2,GGwaveop4,EGG,YajNew3}.  Generically, the existence of threshold resonances shrinks the range of $p$, though some range may be recovered if certain orthogonality conditions between the potential and the threshold eigenspace hold.  Similar issues arise with point-interaction potentials, \cite{DMSY}.
However, to the best of the authors' knowledge, there are no investigations of the $L^p$ boundedness of higher order Schr\"odinger operators in the literature to date. The fourth order Schr\"odinger operators are of particular physical interest due to their use in modeling laser beam propagation, see \cite{K,KS}.

Under the assumptions on $V$ in Theorem~\ref{thm:main}, the operator $H$ is self-adjoint and $\sigma_{ac}(\Delta^2)=\sigma_{ac}(H)=[0,\infty)$.  
Lack of embedded eigenvalues is a standard assumption in the analysis of dispersive equations, however it should not be taken for granted. Perturbing $\Delta^2$ may induce positive eigenvalues even for smooth potentials.  We note that \cite{soffernew} gives an easily applied criterion for the potential to exclude embedded eigenvalues.
The existence of threshold resonances or eigenvalues is known to affect the dispersive estimates, see the work of  Erdo\smash{\u{g}}an, Toprak and the authors in various combinations \cite{GT4,egt,GG4}.  In higher dimensions, the effect of threshold resonances on dispersive estimates between weighted $L^2$ spaces was studied in \cite{FWY}.   Due to the dispersive estimates proven in \cite{egt} in the case of a mild ``resonance of the first kind," we expect the wave operators to be bounded for $1<p<\infty$ in this case.

The paper is organized as follows.  In Section~\ref{sec:prelim} we introduce notation and other results we use repeatedly in our analysis.  In Section~\ref{sec:low} we prove Theorem~\ref{thm:main} when the spectral parameter is in a small neighborhood of zero energy.  In Section~\ref{sec:high} we establish Theorem~\ref{thm:main} holds for the remaining portion of the continuous spectrum that doesn't contain zero and complete the proof of Theorem~\ref{thm:main}.  Finally in Section~\ref{sec:Minv} we prove a technical expansion of the resolvent in a neighborhood of zero energy that is needed for our analysis in Section~\ref{sec:low}.

\section{Preliminaries} \label{sec:prelim}

For the convenience of the reader we begin by defining notation we use throughout the paper, and collect some useful integral estimates.

We use the notation $\beta-$ to denote $\beta-\epsilon$ for an arbitrarily small, but fixed $\epsilon>0$.  Similarly, $\beta+=\beta+\epsilon$.  We also use the polynomially weighted spaces
$$
	L^{2,\sigma}=\{ f\, :\,  \la \cdot \ra^{\sigma}f \in L^2 (\R^3) \}.
$$
We say that an operator $T:L^2\to L^2$ with integral kernel $T(x,y)$ is absolutely bounded if the operator with integral kernel $|T(x,y)|$ is a bounded operator on $L^2$.

Finally, for functions of the spectral variable $\lambda$ we write $h(\lambda)=O_2(\lambda^k)$ to indicate that $|h(\lambda)|\les \lambda^k$ and $|\partial_\lambda^j h(\lambda)|\les \lambda^{k-j}$ for each $0\leq j\leq 2$.  We may also include dependence on the spatial variables in some combination, writing $h(\lambda)=O_2(\lambda^k g(x,z,w,y))$ to denote that $|\partial_\lambda^j h(\lambda)|\les \lambda^{k-j}|g(x,z,w,y)|$ for $0\leq j\leq 2$.  For our purposes, we need operators and functions whose first two derivatives in $\lambda$ may be bounded comparable to division by $\lambda$.  We use the notation $h(\lambda)=O_3(\lambda^k)$ if we have control of the first three derivatives.

Our starting point is the stationary representation for the wave operators,
\begin{align}\label{eq:stat rep}
W_+u=u-\frac{2}{\pi i} \int_0^\infty \lambda^3 R_V^+(\lambda^4)V[R^+-R^-](\lambda^4) u\, d\lambda.
\end{align}
Where $R(\lambda^4)=(\Delta^2-\lambda^4)^{-1}$ and $R_V(\lambda^4)=(\Delta^2+V-\lambda^4)^{-1}$ are the resolvent operators.  The stationary representation in \eqref{eq:stat rep} utilizes the limiting resolvent operators, for $z>0$:
\begin{align*}
	&R^\pm(z)=R(z\pm i0)=\lim_{\epsilon \searrow 0} (\Delta^2-(z\pm i \epsilon))^{-1},\\
	&R_V^\pm(z)=R(z\pm i0)=\lim_{\epsilon \searrow 0} (\Delta^2+V-(z\pm i \epsilon))^{-1}.
\end{align*}
Our proof of Theorem~\ref{thm:main} relies on a careful examination of the integral kernel of the wave operator $W_+(x,y)$.  This suffices to prove the same bounds for $W_-$, as the action of complex conjugation $\mathcal C$ relates the operators through the identity $W_-=\mathcal C^{-1}W_+\mathcal C$.  By duality, when $W_\pm$ are bounded on $L^p$ the adjoint operators $(W_\pm)^\ast$ are bounded on $L^{p'}$ where $p'$ is the H\"older conjugate of $p$.
We use the following lemma on $L^p$-boundedness often. 
\begin{lemma}\cite[Lemma~3.1]{YajNew3}\label{lem:kernels}	
	Suppose that $K$ is an integral operator whose kernel obeys the pointwise bounds
	\begin{align} \label{eqn:kernels}
	|K(x,y)|\les \frac{1}{\la x\ra \la y \ra \la |x|-|y|\ra^{2+\epsilon}}.
	\end{align}
	Then $K$ is a bounded operator on $L^p(\mathbb R^3)$ for $1\leq p\leq \infty$ if $\epsilon>0$, and on $1<p<\infty$ if $\epsilon=0$.  
	
\end{lemma}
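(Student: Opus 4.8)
The plan is to treat the two cases of the lemma separately, exploiting that the majorant
\begin{align*}
G(x,y):=\frac{1}{\la x\ra\la y\ra\la |x|-|y|\ra^{2+\epsilon}}
\end{align*}
is symmetric in $x$ and $y$ and depends only on the radii $|x|,|y|$. For $\epsilon>0$ I would simply run the Schur test. Passing to polar coordinates and writing $a=|x|$, $b=|y|$, the elementary bounds $\frac{b^2}{\la b\ra}\les\la b\ra$ and $\la b\ra\les\la a\ra\la a-b\ra$ reduce $\int_{\R^3}G(x,y)\,dy$ to a constant multiple of $\la a\ra^{-1}\int_0^\infty\la a\ra\la a-b\ra^{-1-\epsilon}\,db=\int_0^\infty\la a-b\ra^{-1-\epsilon}\,db$, which is bounded uniformly in $a$ precisely because $\epsilon>0$. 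Hence $\sup_x\int G\,dy\les1$, and by the symmetry of $G$ also $\sup_y\int G\,dx\les1$. The first bound gives boundedness of $K$ on $L^\infty$, the second on $L^1$, and Riesz--Thorin interpolation yields all $1\le p\le\infty$, which is the claim for $\epsilon>0$.

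For $\epsilon=0$ the last integral diverges logarithmically, so the Schur test genuinely fails at $p=1,\infty$ and a finer argument is needed. Since $G$ is radial, for any $f$ I would bound $|Kf(x)|\les\int_0^\infty G_0(|x|,b)\,\overline{f}(b)\,b^2\,db$, where $G_0(a,b)=\la a\ra^{-1}\la b\ra^{-1}\la a-b\ra^{-2}$ and $\overline{f}(b)=\frac{1}{4\pi}\int_{S^2}|f(b\omega)|\,d\omega$ is the spherical average of $|f|$. Jensen's inequality gives $\|\overline{f}\|_{L^p((0,\infty),\,b^2db)}\les\|f\|_{L^p(\R^3)}$, so it suffices to prove that $\mathcal S g(a)=\int_0^\infty G_0(a,b)g(b)\,b^2\,db$ is bounded on $L^p((0,\infty),b^2\,db)$. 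Conjugating by the isometry $g\mapsto b^{2/p}g$ onto $L^p((0,\infty),db)$ turns $\mathcal S$ into the integral operator with kernel
\begin{align*}
\tilde G_p(a,b)=\frac{a^{2/p}\,b^{2/p'}}{\la a\ra\la b\ra\la a-b\ra^2},
\end{align*}
$p'$ being the conjugate exponent, so the problem reduces to boundedness of this one--dimensional operator on $L^p(0,\infty)$.

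The gain is that the Jacobian weights $a^{2/p}b^{2/p'}$ now exactly balance the kernel at the borderline. I would again use the Schur test, splitting the $b$--integral into the near--diagonal region $\tfrac12 a\le b\le2a$ and the two far regions $b<\tfrac12 a$ and $b>2a$. On the diagonal region $\tilde G_p(a,b)\approx\la a-b\ra^{-2}$, which integrates to a constant; on $b>2a$ one has $\la a-b\ra\approx b$, so the integral is $\les a^{2/p-1}\int_{2a}^\infty b^{2/p'-3}\,db\approx a^{-1}$, convergent exactly because $2/p'-3<-1$, i.e. $p<\infty$; on $b<\tfrac12 a$ one has $\la a-b\ra\approx\la a\ra$ and the integral is again controlled by $a^{-1}$ using $2/p'>0$. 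Thus $\sup_a\int\tilde G_p(a,b)\,db\les1$ when $p<\infty$, and by the $a\leftrightarrow b$, $p\leftrightarrow p'$ symmetry of $\tilde G_p$ one obtains $\sup_b\int\tilde G_p(a,b)\,da\les1$ when $p>1$. For $1<p<\infty$ both hold, giving $L^1$ and $L^\infty$ bounds for the conjugated operator and hence $L^p(0,\infty)$ boundedness by interpolation; unwinding the reduction proves that $K$ is bounded on $L^p(\R^3)$.

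The main obstacle is the bookkeeping in this last step: one must confirm that the exponents in the two far regions land on the correct side of the logarithmic threshold, and that the contributions near $a=0$ or $b=0$, where the brackets behave like constants rather than like their arguments, stay bounded. This region analysis is precisely what pins down the restriction $1<p<\infty$ when $\epsilon=0$: the endpoint failures at $p=1$ and $p=\infty$ are the very logarithmic divergences that doomed the naive Schur test, now reappearing symmetrically through the two directions of the weighted test.
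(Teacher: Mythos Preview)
Your argument is correct. For $\epsilon>0$ both you and the paper simply run the Schur test in polar coordinates.

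For $\epsilon=0$ the approaches diverge. The paper stays in three dimensions and uses the principle (Minkowski's integral inequality followed by H\"older) that $K$ is bounded on $L^p(\R^3)$ once $y\mapsto\|K(\cdot,y)\|_{L^p_x}$ lies in $L^{p'}_y$; it then computes $\int_{\R^3}G(x,y)^p\,dx$ directly over the three regions $|x|<\tfrac12|y|$, $|x|\approx|y|$, $|x|>2|y|$ and checks the resulting function of $y$ is in $L^{p'}$. You instead pass to spherical averages, conjugate onto unweighted $L^p(0,\infty)$, and Schur-test the resulting one-dimensional kernel $\tilde G_p$. The three-region splits correspond exactly, and both isolate the same constraint $1<p<\infty$; your reduction makes the radial structure explicit and packages the endpoint failure as a clean symmetry in $p\leftrightarrow p'$, at the cost of the extra averaging/conjugation machinery, while the paper's direct estimate is shorter but carries the three-dimensional integrals throughout. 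One minor imprecision in your write-up: on the region $b<\tfrac12 a$ you invoke $2/p'>0$, but even at $p=1$ that piece only picks up a harmless $a^{-1}\log a$, so the sole constraint for the $\sup_a\int\tilde G_p\,db$ bound is $p<\infty$ from the $b>2a$ region; $p>1$ then enters only through the symmetric $\sup_b\int\tilde G_p\,da$ condition, exactly as your summary states.
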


We say that a kernel of an operator $K$ is admissible if 
$$
	\sup_{x\in \R^3}\int_{ \R^3} |K(x,y)|\, dy+\sup_{y\in \R^3}\int_{ \R^3} |K(x,y)|\, dx<\infty.
$$
By the Schur Test, an operator with admissible kernel  is bounded on $L^p$ for all $1\leq p\leq \infty$.
There is room in the $\eps = 0$ statement to allow for logarithmic factors, which we prove as a corollary. The logarithmic version is useful to avoid unnecessary integration by parts that would demand faster decay from the potential.

\begin{proof}
The basic principle here is that $K(x,y)$ is a bounded operator on $L^p(\R^3)$ if $(\int_{\R^3} |K(x,y)|^p)^{1/p} dx$ belongs to $L^{p'}(\R^3)$ with respect to $y$.  Since the kernel bound in \eqref{eqn:kernels} is symmetric in $x$ and $y$, the operator is also bounded on $L^{p'}$ and exponents in between.

When $\epsilon = 0$, it is convenient to split the integral into three regions according to whether $|x| > 2|y|$, $|x| <  \frac12 |y|$, or $\frac12|y| \leq |x| \leq 2|y|$.  In the region where $|x|\approx |y|$, which is also symmetric in $x$ and $y$, switching to polar coordinates we see that
$$
	\int_{|x| \approx |y|}|K(x,y)|dx \les  \frac{1}{\la y\ra^2}\int_{ |y|/2}^{2|y|} \frac{r^2}{\la r-|y|\ra^2}\, dr \les \frac{|y|^2}{\la y\ra^2}\int_{ |y|/2}^{2|y|} \frac{1}{\la r-|y|\ra^2}\, dr\les 1,
$$ 
uniformly in $y$. This part of the operator is bounded for any $1 \leq p \leq \infty$.

In the remaining regions one obtains (using that $|\,|x|-|y|\,| \approx |y|$ when $|x|<\f12|y|$)
\[
\int_{|x| < \frac12|y|} \frac{1}{\la x\ra^p\la y\ra^p \la|x| - |y|\ra^{2p}}dx \les \int_{|x| < \frac12|y|} \frac{1}{\la x\ra^p \la y\ra^{3p}}dx \les \la y\ra^{\max(3 -4p, -3p)},
\] 
and (using that $|\,|x|-|y|\,| \approx |x|$ when $|x|>2|y|$)
\[
\int_{|x| > 2|y|} \frac{1}{\la x\ra^p\la y\ra^p\la|x| - |y|\ra^{2p}}dx \les \int_{|x| > 2|y|} \frac{1}{\la x\ra^{3p}\la y\ra^p}dx \les \la y\ra^{3-4p} \text{ if } p>1.
\] 
We see that the large $|x|$ region constrains the range of $p$.  Finally,  we note that $\la y\ra^{\max(3/p - 4, -3)} = \la y\ra^{\max(-3/p' - 1, -3)}$ belongs to $L^{p'}(\R^3)$ for any $1< p < \infty$ so these parts of the operator $K(x,y)$ are bounded on $L^p(\R^3)$ as long as $1 < p < \infty$.

When $\epsilon > 0$ one can check in polar coordinates that
\[
\sup_{y\in \mathbb R^3}\int_{\R^3} \frac{1}{\la x\ra \la y\ra \la |x|-|y|\ra^{2+\epsilon}}dx
= 4\pi \int_0^\infty \frac{r^2}{\la r\ra \la y\ra \la r-|y|\ra^{2+\epsilon}}dr
\les 1.
\]
The last inequality follows by  breaking up into regions based on whether $r\leq \f12|y|$, $r\approx |y|$ or $r\geq 2|y|$, with integrability for large $r$ requiring $\epsilon >0$.  The same bound also holds when $x$ and $y$ are interchanged, hence $K$ has an admissible kernel and is bounded for $1\leq p\leq \infty$.

\end{proof}

\begin{corollary}\label{cor:log bd}
	
	Suppose that $K$ is an integral operator whose kernel obeys the pointwise bounds
	\begin{align*}
	|K(x,y)|\les \frac{\la \log (\la |x|-|y|\ra) \ra }{\la x\ra \la y \ra \la |x|-|y|\ra^{2}}.
	\end{align*}
	Then $K$ is a bounded operator on  $1<p<\infty$.  
	
\end{corollary}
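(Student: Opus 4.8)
The plan is to reduce Corollary~\ref{cor:log bd} to Lemma~\ref{lem:kernels} by absorbing the logarithmic factor into a small polynomial power. The key observation is that for any $\epsilon>0$ there is a constant $C_\epsilon$ with $\la \log(\la s\ra)\ra \les C_\epsilon \la s\ra^{\epsilon}$ for all $s\in\R$; indeed $\log(\la s \ra) = o(\la s\ra^\epsilon)$ as $s\to\infty$ and both sides are bounded near $s=0$. Applying this with, say, $\epsilon = \tfrac12$ (or any fixed $\epsilon$ with $0<\epsilon<1$, but the exact value is immaterial), we obtain the pointwise bound
\[
|K(x,y)| \les \frac{\la\log(\la |x|-|y|\ra)\ra}{\la x\ra\la y\ra \la |x|-|y|\ra^{2}} \les \frac{1}{\la x\ra\la y\ra \la |x|-|y|\ra^{2-\epsilon}}.
\]
This is weaker than \eqref{eqn:kernels}, so I cannot invoke Lemma~\ref{lem:kernels} directly; instead I would rerun the $\epsilon=0$ argument of that lemma's proof with the exponent $2$ replaced by $2-\epsilon$ and check that the conclusion $1<p<\infty$ survives.

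Concretely, I would split $\R^3$ into the three regions $|x|>2|y|$, $|x|<\tfrac12|y|$, and $\tfrac12|y|\le |x|\le 2|y|$ exactly as in the lemma. On the diagonal region $|x|\approx|y|$, using $\la\log(\la |x|-|y|\ra)\ra\les \la |x|-|y|\ra^{\epsilon}$ the relevant integral becomes $\la y\ra^{-2}\int_{|y|/2}^{2|y|} r^2 \la r-|y|\ra^{-2+\epsilon}\,dr \les \int_{|y|/2}^{2|y|}\la r-|y|\ra^{-2+\epsilon}\,dr\les 1$ uniformly in $y$, since $2-\epsilon>1$; this part is admissible and bounded on all $L^p$. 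On the off-diagonal regions one replaces the power $2p$ in the lemma's computation by $(2-\epsilon)p$: for $|x|<\tfrac12|y|$ one gets a bound $\la y\ra^{\max((2-\epsilon)p - 3p, 3 - (2+\epsilon)p -p)} = \la y\ra^{\max(-(1+\epsilon)p, 3-(3+\epsilon)p)}$, and for $|x|>2|y|$ one gets (for $p>1$, which is exactly where the large-$|x|$ region forces the restriction) a bound $\la y\ra^{3 - (3+\epsilon)p}$. In both cases the exponent is $3-(3+\epsilon)p$ or smaller when $p$ is near $1$, which tends to $-\epsilon<0$ as $p\to 1^+$, so $\la y\ra^{(3-(3+\epsilon)p)/p} = \la y\ra^{3/p - 3 - \epsilon}$ lies in $L^{p'}(\R^3)$ for all $1<p<\infty$ — note $3/p - 3 - \epsilon < -3/p'$ fails to be automatic only if one is careless, but in fact $3/p-3-\epsilon = -3/p' - \epsilon < -3/p'$, so it is even better than the borderline case in the lemma. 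By the ``basic principle'' quoted at the start of the lemma's proof (that $K$ is bounded on $L^p$ if $(\int|K(x,y)|^p\,dx)^{1/p}\in L^{p'}_y$) together with the symmetry of the kernel bound in $x\leftrightarrow y$, this establishes boundedness on $L^p$ for $1<p<\infty$.

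I do not anticipate a serious obstacle here: the only thing to be careful about is the bookkeeping of exponents to confirm that introducing the extra $\la |x|-|y|\ra^{\epsilon}$ does not push the borderline exponent across the integrability threshold, and that the diagonal integral still converges — both of which hold with room to spare because the loss $\epsilon$ can be taken arbitrarily small while $2-\epsilon$ stays above $1$. An alternative, essentially equivalent route is to note that $\la\log(\la s\ra)\ra \les_\epsilon \la s\ra^{\epsilon/2}$ and then interpolate, or simply to cite that the proof of Lemma~\ref{lem:kernels} goes through verbatim with $2$ replaced by any exponent in $(1,2]$; I would likely present the short self-contained version above for clarity. The mild cost is that this argument does not recover $p=1$ or $p=\infty$, but the statement of the corollary only claims $1<p<\infty$, consistent with the $\epsilon=0$ case of the lemma.
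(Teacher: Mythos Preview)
Your strategy is exactly the paper's: dominate the logarithm by $\la |x|-|y|\ra^{\delta}$ and rerun the $\epsilon=0$ case of Lemma~\ref{lem:kernels} with exponent $2-\delta$ in place of $2$. However, your execution contains a real slip that you should fix before writing the final version.

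The claim that ``the exact value [of $\epsilon$] is immaterial'' and that the large-$|x|$ integral converges ``for $p>1$'' is wrong. With the kernel bound $\la x\ra^{-1}\la y\ra^{-1}\la |x|-|y|\ra^{-(2-\delta)}$, on the region $|x|>2|y|$ one has $\la |x|-|y|\ra\approx \la x\ra$, so the relevant integral is $\int_{|x|>2|y|}\la x\ra^{-(3-\delta)p}\,dx$, which converges only when $p>\tfrac{3}{3-\delta}$ and then contributes $\la y\ra^{3-(4-\delta)p}$ (not $\la y\ra^{3-(3+\epsilon)p}$; your off-diagonal exponents are miscomputed). Similarly, the $|x|<\tfrac12|y|$ region yields $\la y\ra^{\max(3-(4-\delta)p,\,-(3-\delta)p)}$, and placing $\la y\ra^{-(3-\delta)}$ in $L^{p'}$ forces $p'>\tfrac{3}{3-\delta}$. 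Thus for a \emph{fixed} $\delta$ one only obtains boundedness for $p\in\big(\tfrac{3}{3-\delta},\,\tfrac{3}{\delta}\big)$; taking $\delta=\tfrac12$ gives roughly $p\in(1.2,6)$, not all of $(1,\infty)$. The paper handles this exactly as your closing sentence hints: given any $1<p<\infty$, choose $\delta$ small enough that $p$ falls in the above interval. Once you correct the exponents and make the $\delta$-dependence of the $p$-range explicit, your argument coincides with the paper's.
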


\begin{proof}
	
	The proof mirrors the $\epsilon=0$ case of Lemma~\ref{lem:kernels}.  We use the fact that $\la \log (\la |x|-|y|\ra) \ra \les \la |x|-|y|\ra^{\delta}$, for any choice of $\delta>0$.  In the region when $|x|\approx|y|$ we have (for an arbitrarily small $\delta>0$)
	$$
	\int_{|x| \approx |y|}|K(x,y)|dx \les  \frac{1}{\la y\ra^2}\int_{ |y|/2}^{2|y|} \frac{r^2}{\la r-|y|\ra^{2-\delta}}\, dr \les \frac{|y|^2}{\la y\ra^2}\int_{ |y|/2}^{2|y|} \frac{1}{\la r-|y|\ra^{2-\delta}}\, dr\les 1,
	$$ 
	uniformly in $y$.  This requires only choosing $\delta<1$.
	
	When $|x|<\f12|y|$, we have 
	\[
	\int_{|x| < \frac12|y|} \frac{\la \log (\la |x|-|y|\ra) \ra^p}{\la x\ra^p\la y\ra^p \la|x| - |y|\ra^{2p}}dx \les \int_{|x| < \frac12|y|} \frac{1}{\la x\ra^p \la y\ra^{(3-\delta)p}}dx \les \la y\ra^{\max(3 -(4-\delta)p, -(3-\delta)p)},
	\] 
	and when $|x|>2|y|$
	\[
	\int_{|x| > 2|y|} \frac{\la \log (\la |x|-|y|\ra) \ra^p}{\la x\ra^p\la y\ra^p\la|x| - |y|\ra^{2p}}dx \les \int_{|x| > 2|y|} \frac{1}{\la x\ra^{(3-\delta)p}\la y\ra^p}dx \les \la y\ra^{3 -(4-\delta)p} \text{ if } p> \frac{3}{3-\delta}.
	\] 
	The expression $\la y\ra^{\max(-3/p' - (1-\delta), -(3-\delta))}$ belongs to $L^{p'}(\R^3)$ provided $p' > \frac{3}{3-\delta}$.  For a given choice of $0< \delta < 1$, we see that the kernel is an $L^p$-bounded operator for $p \in (\frac{3}{3-\delta}, \frac{3}{\delta})$. Every $1 < p < \infty$ belongs to this range for a sufficiently small $\delta$.
	
\end{proof}

\begin{lemma}\label{lem:L2 decay}
	
	We have the bound
	\[
	\Big\|\frac{\la z\ra^{-\beta}}{|x-z|}\Big\|_{L^2_z} \les \frac{1}{\la x\ra},
	\]
	provided $\beta>\f32$.  On the other hand, 
	\[
	\sup_x \Big\|\frac{\la z\ra^{-\beta}}{|x-z|}\Big\|_{L^2_z} \les 1,
	\]
	provided $\beta>\f12$. 
	
\end{lemma}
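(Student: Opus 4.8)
The plan is to square both quantities and estimate
\[
I(x) \;:=\; \int_{\R^3} \frac{\langle z\rangle^{-2\beta}}{|x-z|^2}\,dz,
\]
splitting the $z$-integral according to whether $z$ lies in the ball $B(x,\langle x\rangle/2)$ or in its complement. The reason for this particular cutoff is that inside the ball the weight $\langle z\rangle^{-2\beta}$ is essentially frozen at $\langle x\rangle^{-2\beta}$, while outside it the singular factor $|x-z|^{-2}$ is no longer an obstruction and can be traded for decay.

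For the inner piece $|z-x|<\langle x\rangle/2$, note that $|z|\geq |x|-\langle x\rangle/2 \gtrsim |x|$ when $|x|\geq 1$, while $\langle z\rangle\approx 1\approx\langle x\rangle$ when $|x|\leq 1$; in either case $\langle z\rangle\gtrsim\langle x\rangle$. Passing to polar coordinates about $x$,
\[
\int_{|z-x|<\langle x\rangle/2}\frac{\langle z\rangle^{-2\beta}}{|x-z|^2}\,dz
\;\les\; \langle x\rangle^{-2\beta}\int_0^{\langle x\rangle/2}dr
\;\les\; \langle x\rangle^{1-2\beta}.
\]
This is $\les\langle x\rangle^{-2}$ when $\beta\geq\frac32$ and $\les 1$ when $\beta\geq\frac12$, which settles the inner region for both assertions.

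The outer piece $|z-x|\geq\langle x\rangle/2$ is handled differently depending on the bound sought. For the first (stronger) estimate one simply uses $|x-z|^{-2}\les\langle x\rangle^{-2}$ there, so the contribution is $\les\langle x\rangle^{-2}\int_{\R^3}\langle z\rangle^{-2\beta}\,dz\les\langle x\rangle^{-2}$, the integral being finite exactly because $2\beta>3$. For the second estimate, where only $\beta>\frac12$ is available and $\langle z\rangle^{-2\beta}$ need not be globally integrable, I would instead observe that on this set $|z|\leq|x|+|x-z|\les\langle x\rangle+|x-z|\les|x-z|$ and $|x-z|\gtrsim 1$, hence $\langle z\rangle\les|x-z|$; substituting back gives $\langle z\rangle^{-2\beta}|x-z|^{-2}\les\langle z\rangle^{-2\beta-2}$, which is integrable over $\R^3$ precisely when $2\beta+2>3$. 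Combining the two pieces and taking square roots yields both stated inequalities.

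The computation is routine; the only delicate point is the choice of split together with the remark that, under the weaker hypothesis $\beta>\frac12$, the apparent singularity $|x-z|^{-1}$ in the outer region in fact decays like $\langle z\rangle^{-1}$, effectively upgrading the decay rate of the weight from $\beta$ to $\beta+1$. This is exactly what makes the second bound available under a hypothesis weaker than the first.
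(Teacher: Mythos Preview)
Your proof is correct and follows essentially the same approach as the paper: square the norm and decompose the resulting integral into regions, estimating each piece directly. The paper's sketch splits according to the size of $|z|$ relative to $|x|$ (or alternatively cites Lemma~3.8 of \cite{GV}), whereas you split according to the size of $|z-x|$ relative to $\la x\ra/2$; these are equivalent elementary decompositions leading to the same bounds $\la x\ra^{1-2\beta}$ and $\la x\ra^{-2}$ under the respective hypotheses.
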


\begin{proof}
	
	The proof follows by dividing the integral into pieces based on the size of $|z|$ compared to $|x|$.  Alternatively, we apply Lemma~3.8 from \cite{GV} to see that
	$$
		\Big\|\frac{\la z\ra^{-\beta}}{|x-z|}\Big\|_{L^2_z}^2
		=\int_{ \R^3}\frac{\la z\ra^{-2\beta}}{|x-z|^2}\, dz
		\les \left\{\begin{array}{ll}
		\la x\ra^{1-2\beta} & \beta <\f32\\
		\la x\ra^{-2} & \beta >\f32
		\end{array}  \right.,
	$$
	provided $2+2\beta>3$.
	
\end{proof}

\section{Low Energy}\label{sec:low}

The low energy behavior of the wave operators constrains the range of $p$ on which the wave operators are bounded, as we show in Section~\ref{sec:high} that the high energy portion is bounded on the full range of $p$.  In this section we develop an appropriate expansion for $R_V^+(\lambda^4)$ in a sufficiently small neighborhood of $\lambda=0$ and control the resulting terms and their contribution to \eqref{eq:stat rep}.  The techniques vary from a delicate argument using harmonic analysis techniques inspired by Yajima's work, \cite{YajNew3}, to a less delicate argument directly bounding oscillatory integrals.

\begin{theorem}\label{thm:low e}
	
	Under the hypotheses of Theorem~\ref{thm:main}, the low energy portion of the wave operator
	$$
		W_+^{L}u= -\frac{2}{\pi i} \int_0^\infty \lambda^3 \chi(\lambda) R_V^+(\lambda^4)V[R^+-R^-](\lambda^4) u\, d\lambda
	$$
	extends to a bounded operator on $L^p(\R^3)$ for any $1<p<\infty$.
	
\end{theorem}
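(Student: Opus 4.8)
The plan is to insert the symmetric resolvent identity into the stationary formula and reduce everything to kernels of the type handled by Lemma~\ref{lem:kernels} and Corollary~\ref{cor:log bd}. First I would recall the free resolvent $R^\pm(\lambda^4)$ for $\Delta^2$ in $\R^3$: since $\Delta^2 - \lambda^4 = (-\Delta - \lambda^2)(-\Delta+\lambda^2)$, a partial fraction decomposition gives $R^\pm(\lambda^4) = \frac{1}{2\lambda^2}\big[R_{\Delta}^\pm(\lambda^2) - R_{\Delta}(-\lambda^2)\big]$ where $R_\Delta$ is the second-order resolvent; hence $R^+ - R^- = \frac{1}{2\lambda^2}[R_\Delta^+(\lambda^2) - R_\Delta^-(\lambda^2)]$, whose kernel is the familiar $\frac{\sin(\lambda|x-y|)}{4\pi\lambda|x-y|}$ times $\lambda^{-2}$, and more generally one has explicit kernels of the form $e^{\pm i\lambda|x-y|}$ and $e^{-\lambda|x-y|}$ over $|x-y|$. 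The measure $\lambda^3\,d\lambda$ in \eqref{eq:stat rep} combined with the $\lambda^{-2}$ from the partial fractions leaves a net $\lambda\,d\lambda$, which is exactly what one needs for stationary-phase/oscillatory bounds after rescaling $\lambda \mapsto \lambda$.

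The main analytic input is the expansion of $R_V^+(\lambda^4)$ near $\lambda = 0$; this is precisely what Section~\ref{sec:Minv} supplies. Using the symmetric resolvent identity, with $v = |V|^{1/2}$ and $U = \operatorname{sgn} V$, write $R_V^+ = R^+ - R^+ v M^+(\lambda)^{-1} v R^+$ where $M^+(\lambda) = U + vR^+(\lambda^4)v$. Under the regularity-at-zero hypothesis, $M^+(\lambda)^{-1}$ admits a Neumann-type expansion in $\lambda$ with absolutely bounded operator coefficients and controlled $\lambda$-derivatives (the $O_2(\lambda^k)$ / $O_3(\lambda^k)$ notation), so that the leading term is $\lambda$-independent and the remainder gains positive powers of $\lambda$. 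I would then substitute everything into
$$
W_+^L u = -\frac{2}{\pi i}\int_0^\infty \lambda^3 \chi(\lambda)\,\big(R^+ - R^+ v M^+{}^{-1} v R^+\big)(\lambda^4)\, V\,[R^+ - R^-](\lambda^4)\, u\,d\lambda,
$$
obtaining a finite sum of terms, each an iterated spatial integral (in intermediate variables $z, w$, say) of products of free-resolvent kernels, the potential $V$ (which provides $\la z\ra^{-\beta}$ decay with $\beta > 9$), and the operator-valued coefficients from the $M^{-1}$ expansion, all against $\lambda^3\chi(\lambda)\,d\lambda$.

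For each such term the strategy is: perform the $\lambda$-integral first, exploiting oscillation. The phases that appear are linear combinations $\pm\lambda|x-z| \pm \lambda|z-w| \pm \lambda|w-y|$ together with decaying exponentials $e^{-\lambda(\cdots)}$; integration by parts in $\lambda$ (legitimate because the coefficients are $O_2$ or $O_3$, i.e. their $\lambda$-derivatives only cost powers of $\lambda^{-1}$ that the measure $\lambda^3\,d\lambda$ absorbs) produces decay in the combined phase variable, which after the triangle inequality becomes decay in $\la\,|x|-|y|\,\ra$. The decaying exponentials are harmless and only help. The spatial integrals in $z$ and $w$ are then estimated using Lemma~\ref{lem:L2 decay} (to convert $\frac{\la z\ra^{-\beta}}{|x-z|}$-type factors into $\la x\ra^{-1}$ decay) and Cauchy–Schwarz, leaving a kernel bounded by $\frac{1}{\la x\ra \la y\ra \la |x|-|y|\ra^{2}}$, possibly with a logarithmic factor when one is unwilling to integrate by parts enough times; either way Lemma~\ref{lem:kernels} or Corollary~\ref{cor:log bd} gives $L^p$-boundedness for $1 < p < \infty$. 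The leading (Born-type) term $R^+ V [R^+ - R^-]$ with no $M^{-1}$, and the leading term of the $M^{-1}$-expansion, are handled by the more delicate harmonic-analysis argument à la Yajima \cite{YajNew3}, where one cannot afford to lose any decay and must track the oscillatory structure carefully; the higher-order terms, carrying extra powers of $\lambda$ from the $M^{-1}$ remainder, are softer and can be bounded by absolute values.

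The main obstacle I anticipate is the borderline term — the contribution with the fewest powers of $\lambda$, where the $\lambda$-integral is only conditionally convergent and naive absolute-value estimates fail. There one must keep the oscillation $e^{i\lambda(|x-z|\pm|z-w|\pm|w-y|)}$ intact, integrate by parts in $\lambda$ exactly twice (no more, to avoid demanding $\beta$ larger than $9$), and handle the resulting $\la\,|x|-|y|\,\ra^{-2}$ versus the $\la\,|x|-|y|\,\ra^{-2-\eps}$ required by Lemma~\ref{lem:kernels} by appealing to the $\eps = 0$ case — or, when even that is lost, to the logarithmic Corollary~\ref{cor:log bd}. Getting the $\beta > 9$ threshold to suffice for all the iterated spatial integrals (each resolvent costs roughly $\f32$ in weight via Lemma~\ref{lem:L2 decay}, and derivatives hitting $R^+(\lambda^4)v$ cost more) is the bookkeeping crux, and is the reason the decay hypothesis is as strong as it is.
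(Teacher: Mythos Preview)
Your overall architecture is right --- symmetric resolvent identity, expansion of $M^+(\lambda)^{-1}$, oscillatory $\lambda$-integrals, then Lemma~\ref{lem:kernels}/Corollary~\ref{cor:log bd} --- but two points diverge from the paper and one of them is a real gap.

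First, a structural difference: the paper does not use $R_V^+ = R^+ - R^+ v (M^+)^{-1} v R^+$. It uses the identity $R_V^+(\lambda^4)V = R^+(\lambda^4)v\,(M^+(\lambda))^{-1}v$, so that the integrand $\lambda^3 R_V^+ V[R^+-R^-]$ becomes $\lambda^3 R^+ v(M^+)^{-1}v[R^+-R^-]$ with \emph{two} free resolvents and no separate Born term. Substituting Proposition~\ref{prop:Minv} yields exactly the three integrals \eqref{eq:wave Q}--\eqref{eq:wave er}. Your version produces a Born piece $\int \lambda^3\chi R^+ V[R^+-R^-]\,d\lambda$ and an $M^{-1}$ piece with \emph{three} free resolvents; this is not wrong, but it is not the paper's decomposition.

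The genuine gap is in how the leading term is handled. You say the Born term and the leading $M^{-1}$ term are both treated by the Yajima-type singular integral argument. In the paper that argument (Lemma~\ref{lem:Hilbert}) is applied only to the $\lambda M_1$ term \eqref{eq:wave M1}, where the integrand carries $\lambda^4$; after writing $\lambda^2 R^+$ and $\lambda^2[R^+-R^-]$ as bounded kernels one is left with no residual power of $\lambda$, and a single integration by parts produces the rational boundary kernel $\tilde A(x,y)=\tfrac{4i|x|}{|x|^4-|y|^4}$ amenable to maximal Hilbert transform bounds. The $QD_0Q$ term \eqref{eq:wave Q} has only $\lambda^3$, so the same reduction leaves a factor of $\lambda^{-1}$ in the integrand and the integration-by-parts scheme of Lemma~\ref{lem:Hilbert} does not apply as written. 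Instead the paper exploits the orthogonality $\int Qv(z)\,dz = 0$: since $Q$ annihilates functions of $x$ alone, one may subtract $e^{i\lambda|x|}\frac{1-e^{(-1-i)\lambda|x|}}{8\pi\lambda|x|}$ from $\lambda R^+(\lambda^4)(x,z)$ for free, arriving at the auxiliary functions $G^\pm$, $F^\pm$ of \eqref{eq:G def}--\eqref{eq:F def}. These satisfy the improved bounds of Lemma~\ref{lem:G bds}, which supply the missing decay in $\la x\ra$, $\la y\ra$ and put the $\lambda$-integrand into the $O_2(\lambda)$ class required by Lemma~\ref{lem:near adm}. That orthogonality step is the key device you have not identified, and without it neither the paper's $QD_0Q$ term nor your Born term (which has the same $\lambda^3$ count but no $Q$ to exploit) fits into the scheme you describe.
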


We note that the low energy behavior of the integral, when $0<\lambda<2\lambda_0\ll 1$ for some sufficiently small constant $\lambda_0$ determines the range of boundedness.  

For the low energy portion we use the smooth cut-off function $\chi(\lambda)$ which is one if $0<\lambda<\lambda_0$ and zero if $\lambda >2\lambda_0$.  We then employ the symmetric resolvent identity to rewrite \eqref{eq:stat rep} slightly.  Defining $U(x)=1$ when $V(x)\geq 0$ and $U(x)=-1$ when $V(x)<0$, $v(x)=|V(x)|^{\f12}$ and $M(\lambda)=U+vR(\lambda^4)v$.  Then, for $\Im(\lambda)\neq 0$ and $\Re(\lambda) > 0$ we may write
$$
	R_V(\lambda^4)=R(\lambda^4)vM(\lambda)^{-1}v
$$
This identity extends to the positive real axis in the expected way,
$$
	R_V^\pm(\lambda^4)=R^\pm(\lambda^4)vM^\pm(\lambda)^{-1}v
$$
with $M^\pm(\lambda) = U + vR^\pm(\lambda^4)v$.

It is well-known in the case of the usual Schr\"odinger operator $-\Delta+V$ that the behavior of  $R_V^\pm$ as $\lambda \to 0^{+}$ is highly dependent on the existence of zero energy resonances or eigenvalues.  This is also true for the fourth order operator, see \cite{GT4,FWY,egt,GG4}.

We use the following identity for the resolvent (following \cite{fsy,egt,GG4})
\be\label{eq:R0lambda}
	R^\pm (  \lambda^4) (x,y)= \frac{1}{2 \lambda^2}  \Bigg( \frac{e^{\pm i \lambda|x-y|}}{4 \pi |x-y|}  -  \frac{e^{-\lambda|x-y|}}{4 \pi |x-y|}  \Bigg).
\ee

This expression has a simple pole at $\lambda = 0$ with residue $a^\pm = \frac{1\pm i}{8\pi}$.  Consequently the power series expansion for $M^\pm(\lambda)$ begins with $\frac{a^{\pm}\|V\|_1}{\lambda}P$, where $P$ is the rank-one projection onto the span of $v$ with kernel
$P(x,y) = \frac{v(x)v(y)}{\|V\|_1}$.  The complementary projection $Q = \mathbbm 1 - P$ appears prominently in some formulas below.

We now collect some useful results on the (limiting) operators.

\begin{prop}\label{prop:Minv}
	If zero is a regular point of the spectrum and if $|V(x)|\les \la x\ra^{-9-}$, then in a sufficiently small neighborhood of zero we have the expansion
	\begin{align}\label{eq:Minv}
	[M^+(\lambda)]^{-1}=QD_0Q+ \lambda M_1 +M_2(\lambda)
	\end{align}
	where $QD_0Q$, and $M_1$ are $\lambda$ independent, absolutely bounded operators, and $M_2(\lambda)$ is a $\lambda$ dependent, absolutely bounded operator satisfying
	$$
		\sum_{j=0}^2 \lambda^{ j} \|\,|\partial_\lambda^j M_2(\lambda)|\,\|_{L^2\to L^2} \les \lambda^2, \quad \lambda>0.
	$$
	
\end{prop}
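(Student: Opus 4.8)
The plan is to run the standard Feshbach / Jensen--Nenciu inversion scheme, organized around the fact that the only singular-in-$\lambda$ piece of $M^+(\lambda)$ near $\lambda=0$ is the rank-one operator $\frac{a^+\|V\|_1}{\lambda}P$. First I would expand the free resolvent: Taylor expand the two exponentials in \eqref{eq:R0lambda} to third order in $\lambda$, noting that the $n=0$ terms cancel and the $n=4$ terms vanish, and that the degree-$3$ Taylor remainder of $e^{\pm i\lambda r}-e^{-\lambda r}$ together with its first two $\lambda$-derivatives is bounded by $C\lambda^4 r^4$, $C\lambda^3 r^4$, $C\lambda^2 r^4$ (Lagrange remainder, using $|\partial_s^4(e^{\pm isr}-e^{-sr})|\les r^4$). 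Dividing by $8\pi\lambda^2 r$ gives
$$
R^\pm(\lambda^4)(x,y)=\frac{a^\pm}{\lambda}+G_0(x,y)+\lambda\,G_1^\pm(x,y)+E^\pm(\lambda)(x,y),
$$
with $a^\pm=\frac{1\pm i}{8\pi}$, $G_0(x,y)=-\frac{|x-y|}{8\pi}$, $G_1^\pm(x,y)=\frac{(1\mp i)|x-y|^2}{48\pi}$ and $E^\pm(\lambda)(x,y)=O_2(\lambda^2|x-y|^3)$. Conjugating by $v$ and using $v(x)\les\la x\ra^{-\beta/2}$, the kernel $v(x)|x-y|^3v(y)$ is Hilbert--Schmidt exactly when $\beta>9$ (via $|x-y|^6\les\la x\ra^6+\la y\ra^6$), so in a small neighborhood of $\lambda=0$,
$$
M^+(\lambda)=\frac{a^+\|V\|_1}{\lambda}P+T+\lambda\,\Gamma_1+\Gamma_2(\lambda),
$$
where $T=U+vG_0v$, $\Gamma_1=vG_1^+v$ is absolutely bounded (Hilbert--Schmidt, since $\beta>7$), and $\Gamma_2(\lambda)=vE^+(\lambda)v$ is absolutely bounded with $\sum_{j=0}^2\lambda^j\|\,|\partial_\lambda^j\Gamma_2(\lambda)|\,\|_{L^2\to L^2}\les\lambda^2$.

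Next I would invert by a $2\times2$ block decomposition with respect to $L^2=PL^2\oplus QL^2$, writing $a(\lambda)=PM^+(\lambda)P$ (a scalar multiple of $P$), $b(\lambda)=PM^+(\lambda)Q$, $c(\lambda)=QM^+(\lambda)P$, $d(\lambda)=QM^+(\lambda)Q$. Regularity of zero is exactly the statement that $d(0)=QTQ$ is invertible on $QL^2$ (cf.\ \cite{GT4,egt,GG4}); let $D_0$ be its inverse on $QL^2$, so $QD_0Q$ extends it by zero to $L^2$. I would also record that $QD_0Q$ is absolutely bounded: writing $QTQ+P=U+K$ with $K$ Hilbert--Schmidt (finite rank plus $vG_0v$, using $\beta>5$) and recalling $U^{-1}=U$, one has $(QTQ+P)^{-1}=U-(UKU)(I+KU)^{-1}$, a multiplication operator plus a Hilbert--Schmidt operator composed with a bounded one, hence absolutely bounded, and $QD_0Q=(QTQ+P)^{-1}-P$. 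For $\lambda$ small, $d(\lambda)=QTQ+\lambda\,Q\Gamma_1Q+Q\Gamma_2(\lambda)Q$ is invertible with $d(\lambda)^{-1}=QD_0Q-\lambda\,QD_0Q\,\Gamma_1\,QD_0Q+O_2(\lambda^2)$; since the $\frac1\lambda P$ term contributes nothing off the diagonal, $b(\lambda),c(\lambda)=O_2(1)$ and $a(\lambda)=\big(\frac{a^+\|V\|_1}{\lambda}+O_2(1)\big)P$, so the Schur complement $s(\lambda)=a(\lambda)-b(\lambda)d(\lambda)^{-1}c(\lambda)=\big(\frac{a^+\|V\|_1}{\lambda}+O_2(1)\big)P$ is an invertible nonzero scalar on $PL^2$ with $s(\lambda)^{-1}=O_2(\lambda)$ by the geometric series. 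The block-inverse formula then reads
$$
[M^+(\lambda)]^{-1}=\begin{pmatrix}s^{-1}&-s^{-1}b\,d^{-1}\\-d^{-1}c\,s^{-1}&d^{-1}+d^{-1}c\,s^{-1}b\,d^{-1}\end{pmatrix}.
$$

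Finally I would read off \eqref{eq:Minv}. Every entry but the $(2,2)$ one is $O_2(\lambda)$, and the $(2,2)$ entry is $d^{-1}+O_2(\lambda)=QD_0Q+O_2(\lambda)$; substituting the first-order expansions of $s^{-1}$, $d^{-1}$, $b=PTQ+O_2(\lambda)$, $c=QTP+O_2(\lambda)$ and collecting powers of $\lambda$ yields $[M^+(\lambda)]^{-1}=QD_0Q+\lambda M_1+M_2(\lambda)$ with $M_1$ explicit and $\lambda$-independent (its $QL^2\to QL^2$ block is $-QD_0Q\,\Gamma_1\,QD_0Q$ plus the order-$\lambda$ part of the correction term, and the remaining blocks are rank-one expressions built from $v$, $Tv$, $D_0$) and $M_2(\lambda)=O_2(\lambda^2)$. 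Absolute boundedness of $QD_0Q$, $M_1$ and $M_2(\lambda)$ — and of their first two $\lambda$-derivatives — propagates from $P$, $T$, $\Gamma_1$, $\Gamma_2$ and $QD_0Q$ by the elementary fact that a product $T_1BT_2$ with $T_1,T_2$ Hilbert--Schmidt (or rank one with nonnegative kernel) and $B$ merely bounded is absolutely bounded, its kernel being dominated by $\|B\|_{L^2\to L^2}\|T_1(x,\cdot)\|_{L^2}\|T_2(\cdot,y)\|_{L^2}$: the only non-absolutely-bounded factors appearing are the scalar $s^{-1}$ and $d^{-1}$, and each occurs wedged between $v$-weighted Hilbert--Schmidt operators or rank-one operators built from $P$. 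The bound $\lambda^j\|\,|\partial_\lambda^j M_2(\lambda)|\,\|_{L^2\to L^2}\les\lambda^2$ follows via Leibniz from the corresponding bounds on $\Gamma_2(\lambda)$, using that each $\lambda$-derivative of $s^{-1}$ or $d^{-1}$ costs at most a factor $\lambda^{-1}$ and preserves absolute boundedness.

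The scheme itself is routine; the real work is bookkeeping, and I expect two points to need care. The first is pinning the remainder $E^\pm(\lambda)$ at exactly $|x-y|^3$ (with its two $O_2$-derivative versions): this is precisely what forces the hypothesis $\beta>9$ and what makes the inversion accurate to order $\lambda^2$, and a larger power of $|x-y|$ would break the Hilbert--Schmidt estimate. The second, and the more delicate, is propagating \emph{absolute} boundedness — not merely $L^2$-boundedness — of every operator and of its first two $\lambda$-derivatives through the block-inverse formula; this forces one to keep $s^{-1}$ and $d^{-1}$ sandwiched between Hilbert--Schmidt factors throughout, and in particular to establish absolute boundedness of $QD_0Q=(QTQ)^{-1}$ via the $U+(\text{Hilbert--Schmidt})$ argument above rather than by simply inverting $QTQ$.
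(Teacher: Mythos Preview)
Your proof is correct and follows essentially the same Feshbach/Jensen--Nenciu scheme as the paper: expand $M^+(\lambda)$ via the Taylor expansion of \eqref{eq:R0lambda}, isolate the rank-one singular part $\frac{a^+\|V\|_1}{\lambda}P$, and invert using the $P\oplus Q$ block structure with $QTQ$ invertible by regularity of zero. The only organizational difference is that the paper first inverts the leading part $A^+(\lambda)=\frac{a^+\|V\|_1}{\lambda}P+T$ by Feshbach (obtaining $(A^+)^{-1}=QD_0Q+g^+(\lambda)S$ with $g^+(\lambda)=(\frac{a^+\|V\|_1}{\lambda}+c)^{-1}$) and then perturbs by a Neumann series, whereas you apply the block-inverse formula directly to the full $M^+(\lambda)$; these are equivalent bookkeepings of the same computation.
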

The proof is technical and we defer it to Section~\ref{sec:Minv} for the convenience of the reader.
The operator $D_0$ is intimately tied to the existence or non-existence of zero energy resonances, \cite{egt}.  For our purposes, we need only that $QD_0Q$ is absolutely bounded and that $Q$ is a projection orthogonal to the span of $v$.  Namely, we use that
$$
	Qv= \int_{ \R^3} Qv(z)\, dz=0.
$$
Substituting the resolvent identity and expansion for $[M^+(\lambda)]^{-1}$ from \eqref{eq:Minv} into \eqref{eq:stat rep} we need to control:
\begin{align}\label{eq:wave Q}
	&\int_0^\infty \lambda^3 R^+(\lambda^4) vQD_0Qv[R^+-R^-](\lambda^4)(x,y)\, d\lambda,\\
	&\int_0^\infty \lambda^4 R^+(\lambda^4) vM_1v[R^+-R^-](\lambda^4)(x,y)\, d\lambda,  \label{eq:wave M1}  \\
	&\int_0^\infty \lambda^3 R^+(\lambda^4) vM_2(\lambda) v[R^+-R^-](\lambda^4)(x,y)\, d\lambda.  \label{eq:wave er}
\end{align}
We show that each term is bounded on $L^p(\R^3)$ for all $1<p<\infty$ in Proposition~\ref{prop:QDQ}, and Lemmas~\ref{lem:Hilbert} and \ref{prop:error} respectively.

We lead with the most delicate term.
\begin{lemma} \label{lem:Hilbert}
Let $M_1$ be a $\lambda$ independent, absolutely bounded operator on $L^2(\R^3)$.  Then the operator $A$ defined by
\begin{equation*}
A = \int_0^\infty \lambda^4 R^+(\lambda^4)vM_1v[R^+ - R^-](\lambda^4)(x,y) \chi(\lambda)\, d\lambda
\end{equation*}
is bounded on $L^p(\R^3)$ for all $1 < p < \infty$ provided $v \in L^2(\R^3)$.
\end{lemma}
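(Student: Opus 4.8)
The plan is to write the integral kernel of $A$ explicitly using \eqref{eq:R0lambda}, expand all the exponential factors, and organize the resulting oscillatory $\lambda$-integrals so that the final bound falls under Lemma~\ref{lem:kernels} (or Corollary~\ref{cor:log bd}). First I would insert the formula \eqref{eq:R0lambda} for $R^\pm(\lambda^4)(x,z)$ and $[R^+-R^-](\lambda^4)(w,y)$ into the definition of $A$. The factor $[R^+-R^-](\lambda^4)(w,y)$ is purely the "oscillatory" part: the exponentially decaying $e^{-\lambda|w-y|}$ pieces cancel, leaving $\frac{1}{2\lambda^2}\cdot\frac{\sin(\lambda|w-y|)}{2\pi|w-y|}$ up to constants. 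On the $R^+(\lambda^4)(x,z)$ side there are two terms, an oscillatory $\frac{e^{i\lambda|x-z|}}{8\pi\lambda^2|x-z|}$ and a decaying $\frac{e^{-\lambda|x-z|}}{8\pi\lambda^2|x-z|}$. After collecting the powers of $\lambda$ (the $\lambda^4$ weight cancels the two $\lambda^{-2}$'s), the kernel $A(x,y)$ becomes a double spatial integral against $v(z)M_1(z,w)v(w)$ of a one-dimensional $\lambda$-integral of the schematic form
\begin{equation*}
\int_0^\infty e^{\pm i\lambda(|x-z|\pm|w-y|)}\,\chi(\lambda)\,\frac{d\lambda}{|x-z|\,|w-y|} \quad\text{and}\quad \int_0^\infty e^{-\lambda|x-z|}e^{\pm i\lambda|w-y|}\,\chi(\lambda)\,\frac{d\lambda}{|x-z|\,|w-y|}.
\end{equation*}

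Next I would bound these $\lambda$-integrals. Because the $\lambda$-integrand has a smooth, compactly supported cutoff and no singularity at $\lambda=0$ (all negative powers of $\lambda$ have cancelled), a single integration by parts in $\lambda$ produces decay $\la |x-z|\pm|w-y|\ra^{-1}$ from the oscillatory terms, and the boundary/derivative terms land on $\chi'$, which is harmless. For the mixed decaying-oscillatory terms one gets decay in $\la |x-z|\ra^{-1}$ or in $\la |x-z| \mp |w-y|\ra^{-1}$ by the same device; I would push this to get at least, say, a $\la\cdot\ra^{-1-}$ or combine $\la|x-z|\ra^{-1}\la|w-y|\ra^{-1}$ factors as needed. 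The upshot is a pointwise bound on the inner $\lambda$-integral of the form (schematically)
\begin{equation*}
\frac{1}{|x-z|\,|w-y|}\cdot\frac{1}{\la |x-z| - |w-y|\ra^{1+} + \la |x-z|+|w-y|\ra^{1+}}
\end{equation*}
(plus the analogous decaying-side contributions), which I then integrate against $|v(z)|\,|M_1(z,w)|\,|v(w)|$ in $z$ and $w$. Here I would use that $M_1$ is absolutely bounded on $L^2$ together with Lemma~\ref{lem:L2 decay}: the factors $\frac{|v(z)|}{|x-z|}$ and $\frac{|v(w)|}{|w-y|}$ are controlled in $L^2_z$ and $L^2_w$ respectively (using $\beta>9>\tfrac32$), producing $\la x\ra^{-1}\la y\ra^{-1}$, while the remaining $\la|x-z|-|w-y|\ra^{-1-}$-type factor has to be converted, via the triangle inequality $|x-z|-|w-y| \gtrsim |x|-|y| - |z| - |w|$ on the support of $v$ (which is effectively bounded by the weight), into $\la |x|-|y|\ra^{-1-}$ at the cost of polynomial weights in $z,w$ that $v$ absorbs. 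Getting the exponent on $\la|x|-|y|\ra$ up to $2+$ is where the delicacy lies.

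The main obstacle — and the reason this is "the most delicate term" — is extracting the full power $\la |x|-|y|\ra^{-2-\epsilon}$ required by Lemma~\ref{lem:kernels}, rather than just $\la|x|-|y|\ra^{-1}$. A single integration by parts in $\lambda$ gives only one power; a second integration by parts would demand an extra derivative hitting $\frac{1}{|x-z|}$ or $\frac{1}{|w-y|}$, which is not integrable near the diagonal, so one cannot simply iterate. The resolution, following Yajima's approach in \cite{YajNew3}, is to treat the $\lambda$-integral as (a piece of) a Hilbert-transform-type / Fourier-multiplier operator in a cleverly chosen variable — exploiting that the phase $|x-z|\pm|w-y|$ appears linearly in $\lambda$ — so that $L^p$-boundedness of the Hilbert transform (or a related Calderón–Zygmund bound) supplies the missing decay in an $L^p$-averaged sense rather than pointwise. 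Concretely I expect to split $A$ into a "good" part whose kernel obeys \eqref{eqn:kernels} directly by the integration-by-parts argument above, and a "bad" part that, after the $z,w$ integrations, reduces to composition with a bounded Hilbert-transform-type operator acting on the $|x|$ or $|x|-|y|$ variable, with $v\in L^2$ and the weights from Lemma~\ref{lem:L2 decay} ensuring the flanking operators map $L^p\to L^p$. Assembling these pieces, and checking the decaying-exponential terms separately (they are easier, since $e^{-\lambda|x-z|}$ already gives a factor that improves the spatial decay), yields boundedness on $1<p<\infty$.
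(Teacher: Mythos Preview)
Your outline correctly isolates the difficulty, but the decisive step---how the Hilbert transform actually enters---is only gestured at, and the organization you propose (integrate in $z,w$ first, then try to land on a kernel bound of the type in Lemma~\ref{lem:kernels}) is not how the argument goes through. The paper does \emph{not} attempt to produce a pointwise estimate $\la x\ra^{-1}\la y\ra^{-1}\la |x|-|y|\ra^{-2}$ for the full kernel after the $z,w$ integration; the singular piece simply does not obey such a bound, and once you have taken absolute values and integrated in $z,w$ the oscillatory structure needed for any Calder\'on--Zygmund argument is gone.

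What the paper does instead is freeze $z,w$ and observe that $A_{z,w}(x,y)=A_{0,0}(x-z,y-w)$, so it suffices to bound the single operator $A_{0,0}$ on $L^p$; then Minkowski for operator norms together with $\iint |v(z)M_1(z,w)v(w)|\,dz\,dw<\infty$ (this uses only $v\in L^2$ and absolute boundedness of $M_1$, matching the lemma's hypotheses) finishes the job. For $A_{0,0}$ one integrates by parts once in $\lambda$ and obtains an explicit \emph{boundary} term
\[
\tilde A(x,y)=\frac{1}{|x||y|}\Big(\frac{1}{i|x|+i|y|}-\frac{1}{i|x|-i|y|}-\frac{1}{-|x|+i|y|}+\frac{1}{-|x|-i|y|}\Big)=\frac{4i|x|}{|x|^4-|y|^4},
\]
restricted to $\big||x|-|y|\big|>1$, plus a remainder that \emph{is} shown to have admissible kernel by further integration by parts (with a mean-value cancellation when $|x|<1$). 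The boundary term $\tilde A$ is then handled by passing to spherical averages, substituting $s=|x|^4$, $r=|y|^4$, and recognizing the resulting one-dimensional operator as a truncated Hilbert transform plus a piece dominated by the Hardy--Littlewood maximal function; boundedness on $L^p$ follows from $A_p$-weight theory for the weight $s^{(p-1)/4}\,ds$. Your plan to convert $\la|x-z|-|w-y|\ra^{-1-}$ into $\la|x|-|y|\ra^{-1-}$ and then invoke a generic singular-integral bound on the result skips all of this concrete structure: without the reduction to $A_{0,0}$ and the explicit identification of $\tilde A$, there is no visible one-dimensional Hilbert transform to apply.
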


Our argument to control this term is adapted from Yajima's work studying the effect of zero energy resonances on the boundedness of the wave operators in three dimensions, \cite{YajNew3}.  This argument takes advantage of the cancellation found in singular integral operators such as the Hilbert transform.

\begin{proof}
Using \eqref{eq:R0lambda} we begin by writing out the full integral defining $A$,
\[
A(x,y) = \int_0^\infty \iint_{\R^6} \frac{e^{i\lambda|x-z|}- e^{-\lambda|x-z|}}{8\pi|x-z|} v(z)M_1(z,w)v(w) \frac{e^{i\lambda|w-y|} - e^{-i\lambda|w-y|}}{8\pi|w-y|} \chi(\lambda) \,dw dz d\lambda.
\]
With the cutoff in $\lambda$ in place, Fubini's theorem allows us to compute the $\lambda$ integral first.
\[
A(x,y) = \iint_{\R^6} \frac{v(z)M_1(z,w)v(w)}{64\pi^2} \int_0^\infty \frac{(e^{i\lambda|x-z|}- e^{-\lambda|x-z|}) (e^{i\lambda|w-y|} - e^{-i\lambda|w-y|})}{|x-z| |w-y|} \chi(\lambda) \, d\lambda.
\]
All that needs to be said about the integral over $(z,w)$ is that $\iint_{R^6} |v(z)M_1(z,w)v(w)| dz dw$ is finite.  This is true because $M_1$ is an absolutely bounded operator and $v\in L^2$. The remaining challenge is to show that for each fixed choice of $z$ and $w$, the
operator
\[
A_{z,w}(x,y) = \int_0^\infty \frac{(e^{i\lambda|x-z|}- e^{-\lambda|x-z|}) (e^{i\lambda|w-y|} - e^{-i\lambda|w-y|})}{|x-z| |w-y|} \chi(\lambda) \, d\lambda
\] 
is bounded on $L^p(\R^3)$ uniformly with respect to $z$ and $w$.  Minkowski's inequality for operator norms then completes the proof.

It is apparent that $A_{z,w}(x,y) = A_{0,0}(x-z, y-w)$.  Thus it sufficies to show that $A_{0,0}$ is bounded on $L^p(\R^3)$ since all other $A_{z,w}$ are derived from it by composition with translations. At all points we get the crude bound 
\[
|A_{0,0}(x,y)| \les \int_0^\infty \min(\lambda, |x|^{-1})\min(\lambda,|y|^{-1}) \chi(\lambda)\,d\lambda \les \la x\ra^{-1} \la y\ra^{-1}.
\]
We note that the kernel $\frac{1}{\la x\ra\la y\ra}$ is admissible when restricted to the set $\{-1 \leq |x| - |y|  \leq 1\} \subset \R^6$. To handle the regions where $|x| - |y|$ is larger than 1, we rewrite the integral once more,
\[
A_{0,0}(x,y) = \frac{1}{|x| |y|}\int_0^\infty \Big(e^{(i|x|+i|y|)\lambda} -e^{(i|x| - i|y|)\lambda} - e^{(-|x|+i|y|)\lambda} + e^{(-|x|-i|y|)\lambda}\Big) \chi(\lambda) \,d\lambda.
\]

An integration by parts yields
\begin{multline*}
A_{0,0}(x,y) = \frac{1}{|x| |y|}\Big( \frac{1}{i|x| + i|y|} - \frac{1}{i|x| - i|y|} - \frac{1}{-|x| + i|y|} + \frac{1}{-|x| - i|y|}\Big)
\\
- \frac{1}{|x| |y|} \int_0^\infty \Big(\frac{e^{(i|x|+i|y|)\lambda}}{i|x| + i|y|} -\frac{e^{(i|x| - i|y|)\lambda}}{i|x|-i|y|} - \frac{e^{(-|x|+i|y|)\lambda}}{-|x|+i|y|} + \frac{e^{(-|x|-i|y|)\lambda}}{-|x|-i|y|}\Big) \chi'(\lambda) \,d\lambda.
\end{multline*}
The boundary term will restrict the range of boundedness as the
integral term generates an admissible kernel. Each one of the four pieces can be integrated by parts repeatedly.  After noting that $\pm|x| \pm i|y|$ has magnitude comparable to $|x| + |y|$, the end result is that this part is
bounded by $\frac{1}{|x||y|} \big(\frac{1}{(|x| + |y|)^3} + \frac{1}{(||x|-|y| |)^3}\big)$.

Suppose $|x| > 1$. Then
\[
\sup_{|x|>1}
\int_{||x| - |y|| > 1} \frac{1}{|x||y|}\bigg(\frac{1}{(|x| + |y|)^3} + \frac{1}{\big(\big||x|-|y|\big|\big)^3}  \bigg)\, dy
\les 1,
\]
with the majority of the integral being supported in the annular region where $|y| \sim |x|$ but $\big||y| - |x|\big| > 1$.
 
If $|x| < 1$, then $|y| > 2|x|$ in order to have $|y| - |x| > 1$. After integrating by parts twice more, using the the Mean Value Theorem
on the function 
$$
	f(s)=\frac{e^{(|x|(\pm s+i(1-s))+i|y|)}}{(|x|(\pm s+i(1-s))+i|y|)^{ 3}}, \quad \text{with} \quad 0\leq s\leq 1,
$$ 
provides for cancellation of the form
\[
\Big| \frac{e^{(i|x|+i|y|)\lambda}}{(i|x| + i|y|)^3} - \frac{e^{(-|x|+i|y|)\lambda}}{(-|x|+i|y|)^3}\Big|,
\Big|\frac{e^{(i|x| - i|y|)\lambda}}{(i|x|-i|y|)^3} - \frac{e^{(-|x|-i|y|)\lambda}}{(-|x|-i|y|)^3}\Big| 
\les |x|\Big(\frac{\lambda}{|y|^3} + \frac{1}{|y|^4}\Big).
\]
So when $|x| < 1$ and $|y|-|x| > 1$, we can find the upper bound
\[
\sup_{x\in \R^3}
\int_{|y| > 1} \int_0^\infty \frac{1}{|x||y|}|x|\Big(\frac{\lambda}{|y|^3} + \frac{1}{|y|^4}\Big) |\chi'''(\lambda)|\,d\lambda dy
\les \int_{|y| > 1} \frac{1}{|y|^4}\,dy \les 1.
\]
Thus the integral remainder part of $A_{0,0}(x,y)$ is integrable in $y$ with a bound independent of the choice of $x \in \R^3$. An identical argument with the variables exchanged completes the proof that this is an admissible kernel.

Finally we must determine the $L^p$ boundedness of the operator $\tilde{A}$ with kernel
\[
\tilde{A}(x,y) = \frac{1}{|x| |y|}\Big( \frac{1}{i|x| + i|y|} - \frac{1}{i|x| - i|y|} - \frac{1}{-|x| + i|y|} + \frac{1}{-|x| - i|y|}\Big) = \frac{4i|x|}{|x|^4 - |y|^4}
\]
supported in the region where $|x| - |y|$ is larger than 1.

Given a function $u$ on $\R^3$, we introduce the spherical averages
$M_u(r) = (4\pi)^{-1} \int_{S^2} u(r\omega) d\omega$.  If we further choose $r = |y|^4$ and $s = |x|^4$, then
\begin{equation} \label{eqn:Hilbert}
\|\tilde{A}u\|_p^p = C\int_0^\infty s^{-\frac14}s^{\frac{p}{4}}\Big(\int_{|\sqrt[4]{r} - \sqrt[4]{s}| > 1} \frac{r^{-\frac14}}{s-r}M_u(\sqrt[4]{r})\,dr\Big)^p\,ds.
\end{equation}
The inner integral is more or less a truncation of the Hilbert Transform of $r^{-\f14}M_u(\sqrt[4]{r})$.  In fact, if $0 \leq s < 1$, then it is exactly a centered truncation of the Hilbert Transform as long as we define $M_u(r) = 0$ for all $r<0$.   

When $s \geq 1$, the integral is truncated away from the interval $(\sqrt[4]{s}-1)^4 < r < (\sqrt[4]{s}+1)^4$.  The interval $(\sqrt[4]{s}-1)^4 < r < (\sqrt[4]{s}+1)^4$  can be subdivided into two pieces according to whether $r$ is greater than $2s - (\sqrt[4]{s}-1)^4$.  The left-hand piece, when $r<2s - (\sqrt[4]{s}-1)^4$, is exactly centered around $r=s$.  Let us briefly denote the right-hand piece as $r \in I_s=(2s - (\sqrt[4]{s}-1)^4,(\sqrt[4]{s}+1)^4)$.

We observe that $I_s$ is contained inside the interval $[s, 16s]$. Furthermore, it follows from $2s-(\sqrt[4]{s}-1)^4<r<(\sqrt[4]{s}+1)^4 $ and the binomial theorem that $s^{3/4} < r-s < 15s^{3/4}$ for all $r \in I_s$. This means that an operator with the integral kernel $K(s,r) = \frac{1}{s-r}\big|_{r\in I_s}$ will be bounded pointwise by the centered Hardy-Littlewood maximal function. In other words, we can guarantee the bound
\[
\Big|\int_{|\sqrt[4]{r} - \sqrt[4]{s}| > 1} \frac{r^{-\frac14}}{s-r}M_u(\sqrt[4]{r})\,dr \Big| \les H^*[r^{-1/4}M_u(\sqrt[4]{r})](s) + \mathcal{M}[r^{-1/4}M_u(\sqrt[4]{r})](s),
\]
where $H^*$ is the maximal truncated Hilbert Transform, and $\mathcal{M}$ is the centered Hardy-Littlewood maximal function.

For all $1 < p < \infty$, the inequality $-1 < \frac{p-1}{4} < p-1$ is satisfied, hence the weighted measure $s^{\frac{p-1}{4}} ds$ belongs to the $A_p$ class. Both the maximal Hilbert Transform and the Hardy-Littlewood Maximal function are bounded operators on $L^p(s^{\frac{p-1}{4}}ds)$, (c.f. Theorems 5.1.12 and 7.1.9 in \cite{Graf}).  Applying this to~\eqref{eqn:Hilbert} yields
\[
\|\tilde{A}u\|_p^p \les \int_0^\infty r^{\frac{p-1}{4}} \big(r^{-\frac14}M_u(\sqrt[4]{r})\big)^p\,dr
\leq \int_0^\infty r^{-\frac14} M_{u^p}(\sqrt[4]{r})\,dr = \|u\|_p^p.
\]
\end{proof}

For the remaining pieces, we apply an argument that will not rely on delicate harmonic analysis.  Instead, we use the following lemma repeatedly.

\begin{lemma}\label{lem:near adm}
	
	If $\mathcal E(\lambda)=O_2(\lambda)$, then
	$$
		\int_{0}^{\infty} e^{i\lambda (|x|\pm |y|)} \chi(\lambda) \mathcal E(\lambda)\, d\lambda \les \frac{\la \log (\la |x|\pm |y|\ra) \ra}{\la|x|\pm |y|\ra^2}.
	$$
	
\end{lemma}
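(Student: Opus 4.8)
The plan is to abbreviate $a=|x|\pm|y|$ and to estimate the scalar oscillatory integral $I(a):=\int_0^\infty e^{i\lambda a}\chi(\lambda)\,\mathcal E(\lambda)\,d\lambda$, treating the regimes $|a|\lesssim 1$ and $|a|\gg1$ separately. When $|a|\le 1/\lambda_0$ there is nothing to do: since $\mathcal E(\lambda)=O_2(\lambda)$ gives $|\mathcal E(\lambda)|\les\lambda$ and $\chi$ is supported in $(0,2\lambda_0)$, we have $|I(a)|\le\int_0^{2\lambda_0}|\mathcal E(\lambda)|\,d\lambda\les\lambda_0^2$, and this is $\les \la\log\la a\ra\ra\la a\ra^{-2}$ because the right-hand side is bounded below by a positive constant on the fixed range $|a|\le1/\lambda_0$.

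For $|a|>1/\lambda_0$ (so that $1/|a|<\lambda_0$ and $\chi\equiv1$ on $(0,1/|a|)$), split $I(a)=\int_0^{1/|a|}+\int_{1/|a|}^{2\lambda_0}$. On the short interval the crude bound $\int_0^{1/|a|}|\mathcal E(\lambda)|\,d\lambda\les\int_0^{1/|a|}\lambda\,d\lambda\les|a|^{-2}$ already suffices. On the long interval I would integrate by parts exactly twice in $\lambda$ using $e^{i\lambda a}=(ia)^{-1}\partial_\lambda e^{i\lambda a}$. Since $\chi$ and all its derivatives vanish at $2\lambda_0$, only the boundary contributions at $\lambda=1/|a|$ survive; the first is $\les|a|^{-1}|\mathcal E(1/|a|)|\les|a|^{-2}$, and the second is $\les|a|^{-2}\,|\partial_\lambda(\chi\mathcal E)(1/|a|)|=|a|^{-2}|\mathcal E'(1/|a|)|\les|a|^{-2}$ (here we use $\chi\equiv1$ near $1/|a|$ and $\mathcal E'=O_1(1)$). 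What remains is the twice-integrated term $(ia)^{-2}\int_{1/|a|}^{2\lambda_0}e^{i\lambda a}\,\partial_\lambda^2(\chi(\lambda)\mathcal E(\lambda))\,d\lambda$.

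The single nontrivial point — and the reason the bound carries a logarithm rather than a clean $\la a\ra^{-2}$ — lies in estimating this last integral. Expanding $\partial_\lambda^2(\chi\mathcal E)=\chi''\mathcal E+2\chi'\mathcal E'+\chi\mathcal E''$, the first two pieces are supported in $[\lambda_0,2\lambda_0]$ and bounded there, contributing $O(1)$; but the hypothesis $\mathcal E=O_2(\lambda)$ only gives $|\mathcal E''(\lambda)|\les\lambda^{-1}$, so $\int_{1/|a|}^{2\lambda_0}|\chi(\lambda)\mathcal E''(\lambda)|\,d\lambda\les\int_{1/|a|}^{2\lambda_0}\lambda^{-1}\,d\lambda\les\log(\lambda_0|a|)\les\log|a|$. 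Hence the long-interval piece is $\les|a|^{-2}\log|a|$, and combining with the short-interval estimate and the case $|a|\lesssim1$ yields $|I(a)|\les\la\log\la a\ra\ra\la a\ra^{-2}$.

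The main obstacle is thus bookkeeping rather than analysis: one must integrate by parts \emph{twice} (a single integration by parts only produces $|a|^{-1}$) while choosing the truncation point exactly at $\lambda=1/|a|$, so that the boundary terms are controlled by the two available derivatives of $\mathcal E$ instead of demanding a third one. This is precisely the trade — flagged in the remark before the lemma — between conceding a logarithmic factor (which Corollary~\ref{cor:log bd} absorbs) and requiring faster decay of the potential; note that the output kernel has the form $\la|x|\pm|y|\ra^{-2}$ up to the log, matching the hypotheses of Lemma~\ref{lem:kernels} and Corollary~\ref{cor:log bd}.
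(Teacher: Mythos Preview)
Your proof is correct and follows essentially the same approach as the paper: split the integral at $\lambda=1/|a|$, bound the short piece directly using $|\mathcal E(\lambda)|\les\lambda$, and integrate by parts twice on the long piece, with the logarithm arising from $\int_{1/|a|}^{2\lambda_0}\lambda^{-1}\,d\lambda$ via the bound $|\mathcal E''(\lambda)|\les\lambda^{-1}$. Your treatment is slightly more explicit (separating the trivial regime $|a|\le 1/\lambda_0$ and expanding $\partial_\lambda^2(\chi\mathcal E)$ by Leibniz), but the argument is the same.
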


\begin{proof}
	
	The boundedness of the integral is clear from the support of the cut-off function $\chi$.
	We separate the integral into two regions based on the size of $\lambda$ and $\frac{1}{|x|\pm |y|}$.  First, we note that
	\begin{align*}
		\bigg|\int_0^{(|x|\pm |y|)^{-1}} e^{i\lambda (|x|\pm |y|)} \chi(\lambda) \mathcal E(\lambda)\, d\lambda \bigg| \les 
		\int_0^{(|x|\pm |y|)^{-1}} \lambda \, d\lambda \les \frac{1}{(|x|\pm |y|)^2}.
	\end{align*}
	On the second region, we integrate by parts two times.  Using the support of $\chi(\lambda)$ we see
	\begin{multline*}
		\bigg|\int_{(|x|\pm |y|)^{-1}}^{\infty } e^{i\lambda (|x|\pm |y|)} \chi(\lambda) \mathcal E(\lambda)\, d\lambda \bigg|\\ 
		\les 	\frac{ | \mathcal E ((|x|\pm |y|)^{-1}
		)| }{(|x|\pm |y|) }+	  \frac{ | \mathcal E^{\prime}((|x|\pm |y|)^{-1}
			)| }{(|x|\pm |y|)^{2}}+\frac{1}{(|x|\pm |y|)^2}
		\int_{(|x|\pm |y|)^{-1}}^{\infty} |\partial_\lambda^2(\chi(\lambda)\mathcal E(\lambda)) |\, d\lambda\\
		 \les \frac{1}{(|x|\pm |y|)^2} + \frac{1}{(|x|\pm |y|)^2}
		 \int_{(|x|\pm |y|)^{-1}}^{\f12}  \lambda^{-1}\, d\lambda \les \frac{1+\log (|x|\pm |y|)}{(|x|\pm |y|)^2}.
	\end{multline*}
	
\end{proof}

\begin{prop}\label{prop:QDQ}
	
	Under the hypotheses of Theorem~\ref{thm:main}, the operators
	$$
		\int_0^\infty \lambda^3 R^+(\lambda^4)vQD_0Qv R^\pm (\lambda^4)\, d\lambda
	$$
	extend to bounded operators on $L^p(\R^3)$ for all $1<p<\infty$.
	
\end{prop}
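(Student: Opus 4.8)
The plan is to estimate the integral kernel $K^\pm(x,y)$ of these operators and then apply Lemma~\ref{lem:kernels} (or Corollary~\ref{cor:log bd}). Inserting \eqref{eq:R0lambda} and using Fubini—legitimate because the $\chi$ cut-off makes the $\lambda$--integral absolutely convergent and $QD_0Q$ is absolutely bounded with $v\in L^1\cap L^2$—we get
\[
K^\pm(x,y)=\iint_{\R^6} v(z)(QD_0Q)(z,w)v(w)\,\mathcal I^\pm(x,z,w,y)\,dz\,dw,\qquad
\mathcal I^\pm=\int_0^\infty \lambda^3 R^+(\lambda^4)(x,z)\,R^\pm(\lambda^4)(w,y)\,\chi(\lambda)\,d\lambda .
\]
The resolvent kernel $R^\pm(\lambda^4)(x,z)=\tfrac{1}{8\pi\lambda^2}\big(e^{\pm i\lambda|x-z|}-e^{-\lambda|x-z|}\big)/|x-z|$ has a simple pole at $\lambda=0$ whose residue $\tfrac{1\pm i}{8\pi\lambda}$ is independent of the spatial variables. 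Since $Qv=0$ yields $\int v(z)(QD_0Q)(z,w)\,dz=0$ for every $w$ and $\int (QD_0Q)(z,w)v(w)\,dw=0$ for every $z$, inside the $z,w$--integral we may replace $R^+(\lambda^4)(x,z)$ by $\widetilde R^+(\lambda)(x,z):=R^+(\lambda^4)(x,z)-\tfrac{1+i}{8\pi\lambda}$ and $R^\pm(\lambda^4)(w,y)$ by $\widetilde R^\pm(\lambda)(w,y):=R^\pm(\lambda^4)(w,y)-\tfrac{1\pm i}{8\pi\lambda}$, since every cross term is annihilated by one of these two identities. After this reduction the $\lambda$--integrand is $O(\lambda)$ as $\lambda\to0^+$, so the zero--energy singularity is gone; the same two identities will be used repeatedly below to discard any piece of the integrand depending on only one of the two spatial pairs $(x,z)$ and $(w,y)$.

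Next I would decompose each $\widetilde R$ into its oscillatory Helmholtz part $e^{\pm i\lambda|x-z|}/(8\pi\lambda^2|x-z|)$, its Yukawa part $-e^{-\lambda|x-z|}/(8\pi\lambda^2|x-z|)$, and the $-\tfrac{1\pm i}{8\pi\lambda}$ correction, expand the product, and evaluate the $\lambda$--integral. Each resulting term has a prefactor $\tfrac{1}{|x-z||w-y|}$ and a phase $e^{i\lambda(\pm|x-z|\pm|w-y|)}$, possibly accompanied by $e^{-\lambda|x-z|}$ and/or $e^{-\lambda|w-y|}$. Splitting the $\lambda$--range according to the sizes of $\lambda|x-z|$ and $\lambda|w-y|$, exactly as in the proof of Lemma~\ref{lem:near adm}: where $\lambda|x-z|\les1$ one uses the elementary bound $|\widetilde R^+(\lambda)(x,z)|\les\min(|x-z|,\lambda^{-1})$ (and likewise for $|w-y|$), and where the argument is $\gtrsim1$ one integrates by parts twice in $\lambda$, gaining from the oscillation. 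Discarding the pieces that see only $|x-z|$ or only $|w-y|$, the surviving, genuinely coupled, terms are oscillatory integrals of precisely the form controlled by Lemma~\ref{lem:near adm} together with rapidly convergent Yukawa--type integrals. The upshot is a pointwise bound for $\mathcal I^\pm$ in which the decay in $\big||x|-|y|\big|$ comes from the factor $\la |x-z|\pm|w-y|\ra^{-2}$ supplied by Lemma~\ref{lem:near adm} (with a possible $\la\log\ra$ loss), the phase $|x-z|-|w-y|$ being the relevant one when $|x|\approx|y|$, while the decay in $|x|$ and $|y|$ is carried by the prefactors $\tfrac1{|x-z|},\tfrac1{|w-y|}$.

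Finally I would carry out the $z,w$--integration. For the contribution with phase $|x-z|-|w-y|$ use $\big||x-z|-|w-y|\big|\ge \big||x|-|y|\big|-|z|-|w|$: on the set $\{|z|+|w|\le\tfrac12\big||x|-|y|\big|\}$ one has $\la|x-z|-|w-y|\ra\gtrsim\la|x|-|y|\ra$ and pulls this factor out of the integral, while on the complement at least one of $v(z),v(w)$ is $\les\la|x|-|y|\ra^{-\beta/2}$, and $\beta>9$ makes that term harmless; the contribution with phase $|x-z|+|w-y|$ and the Yukawa remainders are even more favorable, decaying in $|x-z|+|w-y|\gtrsim|x|+|y|$. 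After extracting $\la|x|-|y|\ra^{-2}$ one is left with $\iint \tfrac{v(z)|(QD_0Q)(z,w)|v(w)}{|x-z|\,|w-y|}\,dz\,dw$; applying the bounded operator $|QD_0Q|$ to $v(\cdot)/|x-\cdot|\in L^2$ and then using Lemma~\ref{lem:L2 decay} (which applies since $\beta/2>3/2$) bounds this by $\la x\ra^{-1}\la y\ra^{-1}$. This produces $|K^\pm(x,y)|\les \dfrac{\la\log\la|x|-|y|\ra\ra}{\la x\ra\la y\ra\la|x|-|y|\ra^{2}}$, and Corollary~\ref{cor:log bd} completes the proof.

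The delicate point is the $\lambda$--integral analysis of the second paragraph: even after removing the pole there is a residual $\lambda^{-1}$ weight in the doubly oscillatory term that is tamed only by pairing it against the single--variable pieces via the cancellation, so one must keep careful track of which identity kills which term, and the accompanying oscillatory estimates must be made uniform in all four spatial variables. The rest is routine, relying only on the absolute boundedness of $QD_0Q$, the rapid decay $v\les\la\cdot\ra^{-\beta/2}$ with $\beta>9$, and Lemmas~\ref{lem:near adm} and \ref{lem:L2 decay}.
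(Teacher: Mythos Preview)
Your overall architecture is right—use $Qv=0$ to subtract, apply Lemma~\ref{lem:near adm}, then push through $|QD_0Q|$ via Lemma~\ref{lem:L2 decay} and invoke Corollary~\ref{cor:log bd}—but the middle step has a real gap.  After you subtract the constant pole $\tfrac{1\pm i}{8\pi\lambda}$ and expand $\widetilde R^+\widetilde R^\pm$ into Helmholtz/Yukawa/correction pieces, the correction cross–terms are indeed killed by the $Q$'s, but what survives is
\[
\lambda^3\,HH'+\lambda^3\,HY'+\lambda^3\,YH'+\lambda^3\,YY'\;=\;\frac{(e^{i\lambda|x-z|}-e^{-\lambda|x-z|})(e^{\pm i\lambda|w-y|}-e^{-\lambda|w-y|})}{\lambda\,|x-z|\,|w-y|}.
\]
Each summand carries a $\lambda^{-1}$ weight, and the doubly oscillatory one, $\tfrac{e^{i\lambda(|x-z|\pm|w-y|)}}{\lambda|x-z||w-y|}$, genuinely depends on both spatial pairs through its phase; it cannot be ``paired against single--variable pieces,'' because those pieces are exactly the ones already annihilated by $Q$.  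So you are left with an amplitude that is $O_2(\lambda^{-1})$ after extracting the phase, and Lemma~\ref{lem:near adm} requires $O_2(\lambda)$.  Keeping the four pieces together does not help either: the sum is $O(\lambda)$ at $\lambda=0$, but there is no single oscillatory phase to factor out, so Lemma~\ref{lem:near adm} still does not apply.

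The paper fixes this by a sharper use of $Qv=0$.  Rather than subtracting the spatially constant pole, it writes
\[
\lambda R^+(\lambda^4)(x,z)=e^{i\lambda|x|}\Big(e^{i\lambda(|x-z|-|x|)}\tfrac{1-e^{(-1-i)\lambda|x-z|}}{8\pi\lambda|x-z|}\Big)
\]
and subtracts the function $e^{i\lambda|x|}\tfrac{1-e^{(-1-i)\lambda|x|}}{8\pi\lambda|x|}$, which depends on $|x|$ and $\lambda$ but not on $z$, hence is still killed by $Q$.  The remainder is $e^{i\lambda|x|}\big(G^+(\lambda,x,z)+F^+(\lambda,x,z)\big)$ with $|\partial_\lambda^k(G^++F^+)|\les \la z\ra^{k+1}\lambda^{-k}/\min(\la x\ra,|x-z|)$: crucially the $\lambda$--derivatives cost powers of $\la z\ra$ rather than $|x-z|$, because the residual phase is $e^{i\lambda(|x-z|-|x|)}$ with $\big||x-z|-|x|\big|\le|z|$.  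Doing the same on the right gives $\mathcal E(\lambda)=\lambda(G^++F^+)(G^\pm+F^\pm)=O_2(\lambda)$ with the phase $e^{i\lambda(|x|\pm|y|)}$ already in place, so Lemma~\ref{lem:near adm} applies directly and yields decay in $\la|x|\pm|y|\ra$ with no need for your triangle--inequality splitting.  The missing idea in your outline is precisely this: subtract a function of $(|x|,\lambda)$ chosen to match the oscillation at $z=0$, not merely the spatially constant residue.
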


We control this operator with a series of lemmas.  We begin by writing
\begin{align}\label{eqn:lambda R}
\lambda R^+(\lambda^4, |x-z|)=\frac{e^{i\lambda|x-z|} - e^{-\lambda|x-z|}}{8\pi \lambda|x-z|} = e^{i\lambda|x|}\Big(e^{i\lambda(|x-z| - |x|)}
\frac{1 - e^{(-1+i)\lambda|x-z|}}{8\pi \lambda |x-z|}\Big).
\end{align}
We now take advantage of the orthogonality relationships between $Q$ and $v$.  Namely,  
$$
	\int_{\R^3} Qv(z) f(x)\, dz=0
$$
for any function $f(x)$ of $x$ only.
Accordingly, we may replace $\lambda R^+(\lambda^4, |x-z|)$ with
\begin{align*}
	e^{i\lambda|x|}&\Big(e^{i\lambda(|x-z| - |x|)} \frac{1 - e^{(-1-i)\lambda|x-z|}}{8\pi \lambda |x-z|}- \frac{1-e^{(-1-i)\lambda|x|}}{8\pi \lambda |x|}\Big) \\
	&= e^{i\lambda|x|}\bigg((e^{i\lambda(|x-z| - |x|)} - 1)\frac{1 - e^{(-1-i)\lambda|x-z|}}{8\pi \lambda |x-z|}  
	+ \frac{1 - e^{(-1-i)\lambda|x-z|}}{8\pi \lambda |x-z|}-\frac{1 - e^{(-1-i)\lambda|x|}}{8\pi \lambda |x|}\bigg).
\end{align*}
We now define the auxiliary functions
\begin{align}\label{eq:G def}
	G^\pm(\lambda,x,z)&=(e^{i\lambda(|x-z| - |x|)} - 1)\frac{1 - e^{(-1-i)\lambda|x-z|}}{8\pi \lambda |x-z|},\\
	F^{\pm}(\lambda, x, z)&=\frac{1 - e^{(-1-i)\lambda|x-z|}}{8\pi \lambda |x-z|}-\frac{1 - e^{(-1-i)\lambda|x|}}{8\pi \lambda |x|}.
	\label{eq:F def}
\end{align}

\begin{lemma}\label{lem:G bds}
	
	We have the bounds
	$$
		|\partial_\lambda^k G^\pm(\lambda, x,z)| \les \frac{\la z\ra^{k+1}}{|x-z|\lambda^k}, \qquad k=0,1,2,
	$$
	and
	$$
		|\partial_\lambda^k F^\pm(\lambda, x,z)| \les \frac{\la z\ra }{\min(\la x\ra,|x-z|)  \lambda^k}, \qquad k=0,1,2.
	$$
	
\end{lemma}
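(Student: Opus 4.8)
The plan is to reduce both bounds to the pointwise behaviour of the single smooth profile $\theta(s):=\frac{1-e^{(-1-i)s}}{8\pi s}$, in terms of which $G^+(\lambda,x,z)=(e^{i\lambda a}-1)\,\theta(\lambda|x-z|)$ and $F^+(\lambda,x,z)=\theta(\lambda|x-z|)-\theta(\lambda|x|)$, where $a=|x-z|-|x|$ and so $|a|\le|z|\le\la z\ra$; I abbreviate $b=|x-z|$, $c=|x|$, and treat only $G^+,F^+$ since $G^-,F^-$ are handled identically after conjugating $i\mapsto-i$. The one preliminary fact needed is that $\theta$ extends to an entire function of $s$ (the numerator vanishes at $s=0$), so all its derivatives are bounded on $[0,1]$, and that $|\theta^{(m)}(s)|\lesssim_m\min(1,s^{-m-1})$ for every $m\ge 0$: on $[1,\infty)$ one writes $\theta(s)=\frac{1}{8\pi s}-\frac{e^{(-1-i)s}}{8\pi s}$, the first summand having $m$-th derivative $\lesssim_m s^{-m-1}$ and the second $\lesssim_m e^{-s}$. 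Equivalently, $\bigl|\partial_\lambda^m[\theta(\lambda t)]\bigr|=t^m|\theta^{(m)}(\lambda t)|\lesssim_m\lambda^{-m}\min(1,(\lambda t)^{-1})\le\min\bigl(\lambda^{-m},\lambda^{-m-1}t^{-1}\bigr)$ for $\lambda,t>0$. All further steps are elementary integrations against the weights $\lambda^{-j}$, $e^{-s}$, $t^{-2}$; no oscillation is exploited. Throughout, $\lambda$ lies in the support of the low-energy cut-off $\chi$, so in particular $\lambda<2\lambda_0<1$.

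For $G^+$ I would expand by the Leibniz rule, $\partial_\lambda^k G^+=\sum_{j=0}^k\binom kj[\partial_\lambda^j(e^{i\lambda a}-1)][\partial_\lambda^{k-j}\theta(\lambda b)]$, and estimate each summand. For $j\ge 1$ one has $|\partial_\lambda^j(e^{i\lambda a}-1)|=|a|^j\le\la z\ra^j$, while $|\partial_\lambda^0(e^{i\lambda a}-1)|\le|a|\lambda$; and $|\partial_\lambda^{k-j}\theta(\lambda b)|\lesssim\lambda^{-(k-j)-1}b^{-1}$ by the preliminary bound. The $j=0$ summand is then $\lesssim|a|\lambda\cdot\lambda^{-k-1}b^{-1}=|a|\lambda^{-k}b^{-1}\le\la z\ra^{k+1}\lambda^{-k}b^{-1}$, and for $j\ge 1$ the summand equals $|a|^j\lambda^{j-1-k}b^{-1}=\bigl(\tfrac{|a|}{\la z\ra}\bigr)^j\la z\ra^{\,j-1-k}\lambda^{\,j-1}\cdot\la z\ra^{k+1}\lambda^{-k}b^{-1}\lesssim\la z\ra^{k+1}\lambda^{-k}b^{-1}$, since $|a|\le\la z\ra$, $\la z\ra\ge1$, $j\le k$, and $\lambda^{\,j-1}\le1$ (trivial for $j=1$, and using $\lambda<1$ for $j=2$, which only arises when $k=2$). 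Summing the finitely many summands gives $|\partial_\lambda^k G^+|\lesssim\la z\ra^{k+1}(|x-z|\,\lambda^k)^{-1}$.

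For $F^+$ I would write $\Xi_k(t):=\partial_\lambda^k[\theta(\lambda t)]=t^k\theta^{(k)}(\lambda t)$, so that $\partial_\lambda^k F^+=\Xi_k(b)-\Xi_k(c)=\int_c^b\Xi_k'(t)\,dt$, an integral over an interval of length $|b-c|\le|z|\le\la z\ra$ on which $t\ge m:=\min(|x-z|,|x|)$. From $\Xi_k'(t)=kt^{k-1}\theta^{(k)}(\lambda t)+\lambda t^k\theta^{(k+1)}(\lambda t)$ and the bounds on $\theta^{(k)},\theta^{(k+1)}$ one checks $|\Xi_k'(t)|\lesssim\lambda^{1-k}$ when $\lambda t\le1$ and $|\Xi_k'(t)|\lesssim\lambda^{-k-1}t^{-2}$ when $\lambda t\ge1$. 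Retaining the length factor $|b-c|$ and splitting the interval at $t=1/\lambda$, one obtains $|\partial_\lambda^k F^+|\lesssim\la z\ra\,\lambda^{1-k}$ when $\lambda m<1$ and $|\partial_\lambda^k F^+|\lesssim\la z\ra\,\lambda^{-k}m^{-1}$ when $\lambda m\ge1$; in both cases $|\partial_\lambda^k F^+|\lesssim\la z\ra(\min(|x-z|,|x|)\,\lambda^k)^{-1}$. If $|x|\ge1$ then $\la x\ra\le\sqrt2\,|x|$, so $\min(\la x\ra,|x-z|)\le\sqrt2\,m$ and the assertion follows. If $|x|<1$ I would instead use the crude bound $|\partial_\lambda^k F^+|\le|\partial_\lambda^k[\theta(\lambda b)]|+|\partial_\lambda^k[\theta(\lambda c)]|\lesssim\lambda^{-k}$; since $\min(\la x\ra,|x-z|)\le\la x\ra<\sqrt2\le\sqrt2\,\la z\ra$, this $\lambda^{-k}$ bound is already $\lesssim\la z\ra(\min(\la x\ra,|x-z|)\,\lambda^k)^{-1}$.

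The hard part is purely bookkeeping in the $F^+$ estimate: the fundamental-theorem argument naturally produces $\min(|x-z|,|x|)$ in the denominator, which coincides with the required $\min(\la x\ra,|x-z|)$ only when $|x|\gtrsim1$, so the regime $|x|<1$ has to be disposed of separately by the trivial bound; and one must keep the factor $|b-c|=\bigl|\,|x-z|-|x|\,\bigr|$, which vanishes exactly on the configurations where $F^+$ itself vanishes, rather than discarding it. It is also worth flagging that the restriction $\lambda<2\lambda_0$ is genuinely used: the contribution to $\partial_\lambda^2 G^+$ in which both $\lambda$-derivatives fall on the oscillatory factor $e^{i\lambda a}$ is of size $\sim a^2\theta(\lambda|x-z|)\sim a^2/(\lambda|x-z|)$ for large $\lambda$, which would not be controlled by $\la z\ra^3/(|x-z|\lambda^2)$ without the low-energy truncation.
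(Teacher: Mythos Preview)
Your proof is correct and follows essentially the same route as the paper's: a Leibniz-rule expansion for $G^\pm$ using $|e^{i\lambda a}-1|\le |a|\lambda$ and the derivative bounds on the decaying factor, and a fundamental-theorem-of-calculus argument for $F^\pm$ over the interval between $|x|$ and $|x-z|$, with the regime $|x|<1$ handled separately by the crude termwise bound. The only cosmetic difference is that you package everything through the single scaled profile $\theta(s)=(1-e^{(-1-i)s})/(8\pi s)$ and its derivative bounds $|\theta^{(m)}(s)|\lesssim\min(1,s^{-m-1})$, which makes the bookkeeping cleaner; the paper works directly with the expressions \eqref{eq:derivs} and, for $F^\pm$, differentiates the integrand $h'(s)$ in $s$ first and then in $\lambda$, whereas you differentiate in $\lambda$ first and then apply the fundamental theorem in $t$---these are equivalent by interchanging the order of differentiation.
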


\begin{proof}
	
	We begin by  noting the following useful bounds.  Given $A > 0$,
	\begin{equation}\label{eq:derivs}
	\begin{aligned}
	\Big|  \frac{1- e^{(-1\mp i)\lambda A}}{A}\Big| &\les \frac{\lambda}{\la \lambda A\ra}, \\
	\Big| \Big(\frac{d}{d\lambda}\Big)^j\Big(\frac{1- e^{(-1\mp i)\lambda A}}{A}\Big)\Big| &\les \frac{A^{j-1}}{\la \lambda A\ra^N}
	\text{ for } j \geq 1. 
	\end{aligned}
	\end{equation}	
	Here the second bound takes advantage of the decay of $\exp((-1\mp i)\lambda A)$.
	We now write
	\[
		G^\pm(\lambda,x,z)=\int_0^{|x-z|-|x|} \bigg(i e^{\pm i\lambda s} \frac{1 - e^{(-1\mp i)\lambda|x-z|}}{8\pi |x-z|}\bigg) ds.
	\]
	The desired bounds follow from the product rule using \eqref{eq:derivs}
	and noting that
	$$
		\bigg| \partial_\lambda^k \bigg( \int_0^{|x-z|-|x|} i e^{\pm i\lambda s}\, ds \bigg)\,  \bigg| \les \lambda \la z\ra^{k+1}.
	$$
	To control $F^\pm(\lambda, x, z)$ we write
	$$
		F^{\pm}(\lambda, x, z) =\frac{1 - e^{(-1-i)\lambda|x-z|}}{8\pi \lambda |x-z|}-\frac{1 - e^{(-1-i)\lambda|x|}}{8\pi \lambda |x|}=\frac{1}{\lambda} \big( h(|x-z|)-h(z) \big),
	$$
	where
	$$
		h(s)=\frac{1-e^{(-1\mp i)\lambda s }}{8\pi s}.
	$$
	The Fundamental Theorem of Calculus allows us to write
	\begin{align*}
		F^{\pm}(\lambda, x, z)=\frac{1}{\lambda}\int_{|x|}^{|x-z|} h'(s)\, ds.
	\end{align*}
	Note that
	$$
		h'(s)=\frac{-1+(1-(-1\mp i)s\lambda) e^{(-1\mp i)\lambda s} }{8\pi s^2}.
	$$
	By Taylor expansion, we can see that the numerator is $c\lambda^2 s^2+ O((\lambda s)^3)$ and may be differentiated two times in $\lambda$ with effect comparable to division by $\lambda$.  In all cases, we also have the crude bound of $|\partial_\lambda^k h'(s)|\les s^{-2}$.  Putting this together, we see that
	$$
		|\partial_\lambda^k F^\pm(\lambda, x,z)| \les \int_{|x|}^{|x-z|} \frac{1 }{   \lambda^k}\min \bigg(\lambda, \frac{1}{\lambda s^2}\bigg)\, ds \les \lambda^{-k} \int_{|x|}^{|x-z|} \frac{1}{  s }\, ds.
	$$
	Noting that $\min(|x|,|x-z|)\leq s$ and that the length of the $s$ interval is bounded by $\la z\ra $ completes the proof for $|x|>1$.  If $|x|<1$, we obtain the bound $\la z\ra\lambda^{1-k}$, which allows us to replace the $|x|$ in the denominator with $\la x\ra$.
	
\end{proof}

Propostion~\ref{prop:QDQ} follows from the next discussion above by applying the following  Lemma.

\begin{lemma}\label{double trouble}
	
	Under the hypotheses of Theorem~\ref{thm:main}, we have
	\begin{multline*}
	\bigg|\int_0^\infty e^{i\lambda(|x|\pm |y|)} \lambda \chi(\lambda) (G^+(\lambda,x,z)+F^+(\lambda, x,z))vQD_0Qv (G^\pm(\lambda,w,y)+F^\pm(\lambda, y,z))\, d\lambda\bigg|\\ 
	\les \frac{\la \log (\la |x|\pm |y|\ra) \ra}{ \la x\ra  \la y \ra   \la |x|\pm|y|\ra^{2}},
	\end{multline*}
	which extends to a bounded operator on $L^p$ for $1<p<\infty$.
	
\end{lemma}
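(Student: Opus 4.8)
\emph{Proof plan.} The idea is to perform the $z$ and $w$ integrations first, so that the claim reduces to a one–variable oscillatory integral of the type handled by Lemma~\ref{lem:near adm}. Write
\[
\Phi^+(\lambda,x,z):=G^+(\lambda,x,z)+F^+(\lambda,x,z),\qquad \Phi^\pm(\lambda,y,w):=G^\pm(\lambda,y,w)+F^\pm(\lambda,y,w),
\]
(on the $y$–side the roles of the spatial variables in Lemma~\ref{lem:G bds} are played by $y$ and $w$), and set
\[
\mathcal E(\lambda):=\la x\ra\la y\ra\,\lambda \iint_{\R^6}\Phi^+(\lambda,x,z)\,v(z)\,(QD_0Q)(z,w)\,v(w)\,\Phi^\pm(\lambda,y,w)\,dz\,dw .
\]
Once we verify $\mathcal E(\lambda)=O_2(\lambda)$ uniformly in $x,y$, the integral in the statement equals $\frac{1}{\la x\ra\la y\ra}\int_0^\infty e^{i\lambda(|x|\pm|y|)}\chi(\lambda)\mathcal E(\lambda)\,d\lambda$, and Lemma~\ref{lem:near adm} delivers exactly the asserted pointwise bound $\frac{\la\log(\la|x|\pm|y|\ra)\ra}{\la x\ra\la y\ra\la|x|\pm|y|\ra^{2}}$.

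The analytic heart is the family of $L^2$ estimates
\[
\big\|\,\big|(\partial_\lambda^k\Phi^+)(\lambda,x,\cdot)\,v(\cdot)\big|\,\big\|_{L^2}\les \frac{\lambda^{-k}}{\la x\ra},\qquad
\big\|\,\big|(\partial_\lambda^k\Phi^\pm)(\lambda,y,\cdot)\,v(\cdot)\big|\,\big\|_{L^2}\les \frac{\lambda^{-k}}{\la y\ra},\qquad k=0,1,2 .
\]
I would prove the first by inserting the bounds of Lemma~\ref{lem:G bds}, namely $|\partial_\lambda^k G^+(\lambda,x,z)|\les \la z\ra^{k+1}|x-z|^{-1}\lambda^{-k}$ and $|\partial_\lambda^k F^+(\lambda,x,z)|\les \la z\ra\,\min(\la x\ra,|x-z|)^{-1}\lambda^{-k}$, together with $v(z)=|V(z)|^{1/2}\les \la z\ra^{-\beta/2}$ with $\beta>9$. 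For the $G$ term one has $\la z\ra^{k+1}v(z)\les\la z\ra^{k+1-\beta/2}$, where $k+1-\tfrac{\beta}{2}<-\tfrac32$ for every $k\le 2$ because $\beta>9$, so Lemma~\ref{lem:L2 decay} gives $\big\|\la z\ra^{k+1}v(z)|x-z|^{-1}\big\|_{L^2}\les\la x\ra^{-1}$. For the $F$ term one uses $\min(\la x\ra,|x-z|)^{-1}\le\la x\ra^{-1}+|x-z|^{-1}$ together with $\la\cdot\ra\,v\in L^2$ (already valid for $\beta>5$) and Lemma~\ref{lem:L2 decay} once more. The $y$–side estimate is identical. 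It is precisely the $k=2$ instance of the $G$ term that forces $\beta>9$, and this is the one place where Lemma~\ref{lem:L2 decay} is applied at its threshold.

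With these bounds in hand I would differentiate $\mathcal E$ under the integral sign (legitimate for each fixed $\lambda>0$ by the bounds of Lemma~\ref{lem:G bds}) and estimate the $(z,w)$–integral by Cauchy--Schwarz, using that $QD_0Q$ is absolutely bounded on $L^2$ by Proposition~\ref{prop:Minv}: with $f(z)=|(\partial_\lambda^b\Phi^+)(\lambda,x,z)\,v(z)|$ and $g(w)=|(\partial_\lambda^c\Phi^\pm)(\lambda,y,w)\,v(w)|$,
\[
\iint_{\R^6} f(z)\,|(QD_0Q)(z,w)|\,g(w)\,dz\,dw \le \big\|\,|QD_0Q|\,\big\|_{L^2\to L^2}\,\|f\|_{L^2}\,\|g\|_{L^2}\les \frac{\lambda^{-b-c}}{\la x\ra\la y\ra},\qquad b+c\le 2 .
\]
Since $\partial_\lambda^a\lambda$ vanishes for $a\ge 2$ and is $\les\lambda^{1-a}$ for $a\in\{0,1\}$, the Leibniz rule applied to the product $\lambda\cdot(\cdots)$ gives $|\partial_\lambda^j\mathcal E(\lambda)|\les\sum_{a+b+c=j}\la x\ra\la y\ra\,\lambda^{1-a}\,\frac{\lambda^{-b-c}}{\la x\ra\la y\ra}\les\lambda^{1-j}$ for $j=0,1,2$; that is, $\mathcal E=O_2(\lambda)$ as required, and Lemma~\ref{lem:near adm} applies.

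For the final $L^p$ assertion, when the sign is $-$ the resulting kernel $\frac{\la\log(\la|x|-|y|\ra)\ra}{\la x\ra\la y\ra\la|x|-|y|\ra^{2}}$ is exactly of the form covered by Corollary~\ref{cor:log bd}; when the sign is $+$ we note that $|x|+|y|\ge\big||x|-|y|\big|$ and that $t\mapsto \la\log(\la t\ra)\ra\la t\ra^{-2}$ is non-increasing, so the $+$ kernel is dominated by the $-$ kernel and Corollary~\ref{cor:log bd} again applies, giving boundedness on $L^p(\R^3)$ for $1<p<\infty$. I expect the main obstacle to be the endpoint $L^2$ bound for $\partial_\lambda^2$ of the $G$ piece, together with the bookkeeping that arranges the $(z,w)$–integrated profile to be exactly $O_2(\lambda)$ with the clean decay $\la x\ra^{-1}\la y\ra^{-1}$; once that is secured the oscillatory integral and the $L^p$ conclusion follow verbatim from Lemma~\ref{lem:near adm} and Corollary~\ref{cor:log bd}.
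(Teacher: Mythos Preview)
Your proposal is correct and follows essentially the same route as the paper: both arguments combine the pointwise bounds of Lemma~\ref{lem:G bds} with Lemma~\ref{lem:L2 decay} and the absolute boundedness of $QD_0Q$ to produce an $O_2(\lambda)$ profile, then invoke Lemma~\ref{lem:near adm} and Corollary~\ref{cor:log bd}. The only cosmetic difference is the order of operations---the paper applies Lemma~\ref{lem:near adm} for fixed $z,w$ with the weights $\la z\ra^3\la w\ra^3$ carried along and then integrates in $(z,w)$, whereas you integrate in $(z,w)$ first via the $L^2$ bounds on $\partial_\lambda^k\Phi^\pm\,v$ and then apply Lemma~\ref{lem:near adm}; your handling of the `$+$' sign by domination is also a small refinement over the paper's direct appeal to Corollary~\ref{cor:log bd}.
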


\begin{proof}
	
	We may rewrite the $\lambda$ integral as
	$$
		\int_{0}^{\infty} e^{i\lambda (|x|\pm |y|)} \chi(\lambda) \mathcal E(\lambda)\, d\lambda,
	$$
	where (ignoring the operator $vQD_0Qv$ for the moment)
	$$
		\mathcal E(\lambda)=\lambda (G^+(\lambda,x,z)+F^+(\lambda, x,z)) (G^\pm(\lambda,w,y)+F^\pm(\lambda, y,z)).
	$$
	By Lemma~\ref{lem:G bds}, we have
	$$
		\mathcal E(\lambda)=  O_2\bigg(\frac{\lambda \la z\ra^3 \la w\ra^3}{ \min(\la x\ra,|x-z|) \min(\la y\ra,|w-y|)} \bigg).
	$$
	This suffices to apply Lemma~\ref{lem:near adm} to see that
	$$
		\bigg| \int_{0}^{\infty} e^{i\lambda (|x|\pm |y|)} \chi(\lambda) \mathcal E(\lambda)\, d\lambda \bigg|\les \frac{\la z\ra^3 \la w\ra^3}{(|x|\pm |y|)^2\min(\la x\ra,|x-z|) \min(\la y\ra,|w-y|) }.
	$$
	On the other hand, a direct integration yields the bound
	$$
		\bigg| \int_{0}^{\infty} e^{i\lambda (|x|\pm |y|)} \chi(\lambda) \mathcal E(\lambda)\, d\lambda \bigg|\les \frac{\la z\ra^3 \la w\ra^3}{\min(\la x\ra,|x-z|) \min(\la y\ra,|w-y|) }.
	$$
	To account for the effect of the operator $vQD_0Qv$, we note that $QD_0Q$ is absolutely bounded, see Lemma~4.3 of \cite{egt}.  Using Lemma~\ref{lem:L2 decay} we have
	\begin{multline}\label{eq:QDQ int}
		\int_{ \R^6} \frac{ |\la z\ra^3 v(z)QD_0Q(z,w)v(w)\la w\ra^3|}{\la |x|\pm |y|\ra^2\min(\la x\ra,|x-z|) \min(\la y\ra,|y-z|) }\, dz\,dw\\
		\les \frac{\la \log (\la |x|\pm |y|\ra) \ra}{\la |x|\pm |y|\ra^2 } \Big\|\frac{\la z\ra^3 v(z)}{\min(\la x\ra,|x-z|)}\Big\|_{L^2_z} \| |QD_0Q|\|_{L^2\to L^2} \Big\|\frac{\la w\ra^3 v(w)}{\min(\la y\ra,|w-y|)}\Big\|_{L^2_w}\\ \les \frac{\la \log (\la |x|\pm |y|\ra) \ra}{\la |x|\pm |y|\ra^2\la x\ra\la y\ra}.
	\end{multline}
	The claim on $L^p$ boundedness follows by Corollary~\ref{cor:log bd}.

\end{proof}

We may now adapt this argument to the error term  from Proposition~\ref{prop:Minv} to control \eqref{eq:wave er}.

\begin{lemma}\label{prop:error}
	
	Under the hypotheses of Theorem~\ref{thm:main}, we have the bound
	$$
	\bigg| \int_0^\infty \lambda^3 R^+(\lambda^4) vM_2(\lambda)v[R^+-R^-](\lambda^4)(x,y)\, d\lambda\bigg| \les \frac{\la \log (\la |x|\pm |y|\ra) \ra}{ \la x\ra  \la y \ra   \la |x|\pm|y|\ra^{2}},
	$$
	which extends to a bounded operator on $L^p$ for $1<p<\infty$.
	
\end{lemma}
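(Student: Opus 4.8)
The plan is to follow the proof of Lemma~\ref{double trouble}, with two adjustments dictated by the differences between $QD_0Q$ and $M_2(\lambda)$. First, since $M_2(\lambda)$ is $\lambda$-dependent it cannot be pulled outside the $\lambda$-integral the way $QD_0Q$ was, so I would carry out the spatial integration first, producing a scalar amplitude, and only then perform the oscillatory integral in $\lambda$. Second, in the absence of any orthogonality analogue of $Qv=0$, the spatial decay $\la x\ra^{-1}\la y\ra^{-1}$ will be manufactured instead from the two extra powers of $\lambda$ that Proposition~\ref{prop:Minv} supplies, namely $\|\,|\partial_\lambda^j M_2(\lambda)|\,\|_{L^2\to L^2}\les \lambda^{2-j}$ for $j=0,1,2$.

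Concretely, using the factorisation \eqref{eqn:lambda R} I would write $\lambda R^+(\lambda^4,|x-z|)=e^{i\lambda|x|}\widetilde G^+(\lambda,x,z)$ with
\[
\widetilde G^+(\lambda,x,z)=e^{i\lambda(|x-z|-|x|)}\,\frac{1-e^{(-1-i)\lambda|x-z|}}{8\pi\lambda|x-z|},
\]
and likewise $\lambda R^\pm(\lambda^4,|w-y|)=e^{\pm i\lambda|y|}\widetilde G^\pm(\lambda,w,y)$, so that $\lambda[R^+-R^-](\lambda^4,|w-y|)$ is the difference of a term carrying the phase $e^{i\lambda|y|}$ and one carrying $e^{-i\lambda|y|}$. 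Because $\sup_{r>0}|R^\pm(\lambda^4,r)|\les\lambda^{-1}$ and $\iint|v(z)M_2(\lambda)(z,w)v(w)|\,dz\,dw\les \lambda^2\|v\|_2^2$, with the low-energy cutoff $\chi(\lambda)$ in place Fubini permits integrating out $z$ and $w$ first, and the kernel in \eqref{eq:wave er} becomes $\int_0^\infty \chi(\lambda)\,\lambda\,\bigl(e^{i\lambda(|x|+|y|)}\mathcal E_+(\lambda)-e^{i\lambda(|x|-|y|)}\mathcal E_-(\lambda)\bigr)\,d\lambda$, where
\[
\mathcal E_\pm(\lambda)=\iint_{\R^6}\widetilde G^+(\lambda,x,z)\,v(z)\,M_2(\lambda)(z,w)\,v(w)\,\widetilde G^\pm(\lambda,w,y)\,dz\,dw .
\]

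Next I would record, exactly as in Lemma~\ref{lem:G bds} via \eqref{eq:derivs}, the pointwise bounds $|\partial_\lambda^k\widetilde G^\pm(\lambda,x,z)|\les \la z\ra^k\lambda^{-k}\min\bigl(1,(\lambda|x-z|)^{-1}\bigr)$ for $k=0,1,2$ (here no difference is subtracted, so one gets $\la z\ra^k$ rather than $\la z\ra^{k+1}$, but also no $\min(\la x\ra,\cdot)$ improvement). Pairing with $v$, using that $\la z\ra^k v\in L^2$ for the crude estimate and Lemma~\ref{lem:L2 decay} for the sharp one (both valid since $\beta>9>3+2k$ when $k\le 2$), gives $\bigl\|v(z)\,\partial_\lambda^k\widetilde G^+(\lambda,x,z)\bigr\|_{L^2_z}\les \lambda^{-k}\min\bigl(1,(\lambda\la x\ra)^{-1}\bigr)$, and similarly in the $(w,y)$ variables. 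Applying the Leibniz rule to the triple product in $\mathcal E_\pm$ and inserting the bound on $\partial_\lambda^j M_2(\lambda)$ yields, for $k=0,1,2$,
\[
|\partial_\lambda^k\mathcal E_\pm(\lambda)|\les \lambda^{2-k}\,\min\bigl(1,(\lambda\la x\ra)^{-1}\bigr)\,\min\bigl(1,(\lambda\la y\ra)^{-1}\bigr).
\]
The key elementary observation is that $\lambda^2\min\bigl(1,(\lambda\la x\ra)^{-1}\bigr)\min\bigl(1,(\lambda\la y\ra)^{-1}\bigr)=\min(\lambda,\la x\ra^{-1})\min(\lambda,\la y\ra^{-1})\le \la x\ra^{-1}\la y\ra^{-1}$, so that $\lambda\,\mathcal E_\pm(\lambda)=O_2\bigl(\lambda\,\la x\ra^{-1}\la y\ra^{-1}\bigr)$. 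Dividing out the $\lambda$-independent factor $\la x\ra^{-1}\la y\ra^{-1}$ and applying Lemma~\ref{lem:near adm} to each sign gives
\[
\Bigl|\int_0^\infty e^{i\lambda(|x|\pm|y|)}\chi(\lambda)\,\lambda\,\mathcal E_\pm(\lambda)\,d\lambda\Bigr|\les \frac{\la\log(\la|x|\pm|y|\ra)\ra}{\la x\ra\la y\ra\,\la|x|\pm|y|\ra^{2}},
\]
which is the claimed kernel bound. Summing the two signs: the $|x|+|y|$ piece is in fact admissible (hence bounded on every $L^p$), while the $|x|-|y|$ piece is bounded on $L^p$ for $1<p<\infty$ by Corollary~\ref{cor:log bd}, completing the proof.

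The main obstacle is the operator-valued bookkeeping forced by the $\lambda$-dependence of $M_2$: since $M_2(\lambda)$ is controlled only through its $L^2\to L^2$ absolutely-bounded norm, one cannot split it term by term as $G/F$ in Lemma~\ref{double trouble}, and must instead treat $v\,\widetilde G^\pm(\lambda,\cdot,\cdot)$ as $L^2$-valued functions of $\lambda$ whose first two $\lambda$-derivatives are estimated uniformly in $x,y$, then contract them against $M_2(\lambda)$, $M_2'(\lambda)$, $M_2''(\lambda)$. The subtle feature is that the unsplit remainders $\widetilde G^\pm$ carry no spatial decay as $\lambda\to 0$ — they tend to the residue $\frac{1\pm i}{8\pi}$ — so every bit of decay in $x$ and $y$ must come out of the two surplus powers of $\lambda$ in $M_2(\lambda)$, which is exactly the role of the elementary identity above. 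Beyond Lemmas~\ref{lem:near adm}, \ref{lem:L2 decay} and Corollary~\ref{cor:log bd} no further harmonic-analytic input is needed.
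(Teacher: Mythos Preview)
Your argument is correct and is essentially the paper's proof written out in full: both extract the phase $e^{i\lambda(|x|\pm|y|)}$, combine the bounds \eqref{eq:derivs} on the factors $\frac{1-e^{(-1\mp i)\lambda|\cdot|}}{|\cdot|}$ with the $\lambda^{2-j}$ smallness of $\partial_\lambda^j M_2(\lambda)$ to produce an $O_2(\lambda)$ amplitude, harvest the spatial decay $\la x\ra^{-1}\la y\ra^{-1}$ via Lemma~\ref{lem:L2 decay}, and then apply Lemma~\ref{lem:near adm} and Corollary~\ref{cor:log bd}. Your explicit identification of $\lambda^2\min(1,(\lambda\la x\ra)^{-1})\min(1,(\lambda\la y\ra)^{-1})\le\la x\ra^{-1}\la y\ra^{-1}$ as the mechanism replacing the missing orthogonality is exactly what the paper's terse reference to ``an analysis as in \eqref{eq:QDQ int}'' encodes.
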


\begin{proof}

	We don't rely on any cancellation between the `+' and `-' resolvents and instead write the integral as
	\begin{multline*}
		\int_0^\infty e^{i\lambda(|x|\pm |y|)}  \lambda^{-1} \chi(\lambda)\\
		e^{i\lambda ( |x-z|-|x| \pm (|y|- |y-w|) )} \bigg(\frac{1 - e^{(-1-i)\lambda|x-z|}}{8\pi \lambda |x-z|}\bigg) vM_2(\lambda)   v \bigg( \frac{1 - e^{(-1\mp i)\lambda|w-y|}}{8\pi \lambda |w-y|}\bigg)\, d\lambda.
	\end{multline*}
	Using \eqref{eq:derivs} and the $\lambda$ smallness of $M_2(\lambda)$ and its derivatives, since  $\||\partial_\lambda^k M_2(\lambda)|\|_{2\to 2}\les \lambda^{2-k}$, suffice to apply Lemma~\ref{lem:near adm}.  When derivatives hit the phase $e^{i\lambda ( |x-z|-|x| \pm (|y|- |y-w|) )}$ the effect is comparable to multiplication by $\la z\ra +\la w\ra$, which is absorbed by the decay of $v(z)$ and $v(w)$ respectively. 
	An analysis as in \eqref{eq:QDQ int} using Lemma~\ref{lem:L2 decay} suffices to establish the pointwise bounds.  Corollary~\ref{cor:log bd} then proves the claim on the range of $L^p$ boundedness.

\end{proof}

We are now ready to prove the low energy portion of the main theorem.

\begin{proof}[Proof of Theorem~\ref{thm:low e}]
	
	Starting from the stationary representation \eqref{eq:stat rep}, using the symmetric resolvent identity in Proposition~\ref{prop:Minv} we need to bound the contribution of three integrals.  Lemma~\ref{lem:Hilbert}   bounds \eqref{eq:wave M1}, Lemma~\ref{double trouble} bounds \eqref{eq:wave Q} bound while Lemma~\ref{prop:error} bounds \eqref{eq:wave er}.
	
\end{proof}

\section{High Energy}\label{sec:high}

In this section we control the high energy portion of the wave operators.  We show that, if the spectral parameter is bounded away from zero, that the wave operators are bounded on the full range of $1\leq p\leq\infty$.  To state our results, we define the complementary cut-off function $\widetilde \chi(\lambda)=1-\chi(\lambda)$, so that $\widetilde \chi(\lambda)$ is supported on $\lambda \geq\lambda_0\gtrsim 1$.

\begin{prop}\label{prop:hi bdd}
	
	Under the hypotheses of Theorem~\ref{thm:main}, the high energy portion of the wave operator
	$$
		W_+^{H}u= -\frac{2}{\pi i} \int_0^\infty \lambda^3 \widetilde\chi(\lambda) R_V^+(\lambda^4)V[R^+-R^-](\lambda^4) u\, d\lambda
	$$
	extends to a bounded operator on $L^p(\R^3)$ for any  $1\leq p\leq \infty$ provided there are no eigenvalues embedded in the spectrum of $H$ and $|V(z)|\les \la z\ra^{-5-}$.
	
\end{prop}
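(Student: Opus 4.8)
The plan is to run the symmetric resolvent identity exactly as in the low‑energy analysis. Since $R_V^+V=R^+vM^+(\lambda)^{-1}v$ with $M^\pm(\lambda)=U+vR^\pm(\lambda^4)v$, the high‑energy wave operator becomes
\[
W_+^{H}=-\frac{2}{\pi i}\int_0^\infty \lambda^3\,\widetilde\chi(\lambda)\,R^+(\lambda^4)\,v\,[M^+(\lambda)]^{-1}\,v\,[R^+-R^-](\lambda^4)\,d\lambda,
\]
so everything hinges on the behaviour of $[M^+(\lambda)]^{-1}$ on $\operatorname{supp}\widetilde\chi\subset\{\lambda\geq\lambda_0\}$. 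The point is that \eqref{eq:R0lambda} together with $|V|\les\la z\ra^{-5-}$ gives $\|\,|vR^\pm(\lambda^4)v|\,\|_{L^2\to L^2}\les\lambda^{-2}$ (the operator is even Hilbert–Schmidt), so for $\lambda$ beyond some $\lambda_1$ the operator $M^\pm(\lambda)$ is invertible by a convergent Neumann series; on the compact interval $[\lambda_0,\lambda_1]$, where $M^\pm(\lambda)$ is $U$ plus a compact operator depending continuously on $\lambda$, the absence of embedded eigenvalues forces invertibility, and differentiating through $\partial_\lambda[M^\pm(\lambda)]^{-1}=-[M^\pm(\lambda)]^{-1}(\partial_\lambda M^\pm(\lambda))[M^\pm(\lambda)]^{-1}$ gives uniform bounds on $[M^+(\lambda)]^{-1}$ and its first two $\lambda$‑derivatives — here $\beta>5$ is used to absorb the $|x-y|$‑ and $|x-y|^2$‑weights that derivatives pull out of \eqref{eq:R0lambda} into the decay of $v$. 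Recording this, one has $[M^+(\lambda)]^{-1}=U+\Gamma(\lambda)$ on $\lambda\geq\lambda_0$ with $\Gamma$ absolutely bounded and $\|\,|\partial_\lambda^j\Gamma(\lambda)|\,\|_{L^2\to L^2}\les\la\lambda\ra^{-2}$ for $j=0,1,2$.

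Substituting and using $vUv=V$ splits $W_+^{H}$ into the first Born term $-\frac{2}{\pi i}\int_0^\infty\lambda^3\widetilde\chi(\lambda)R^+(\lambda^4)V[R^+-R^-](\lambda^4)\,d\lambda$ and a remainder built from $v\Gamma(\lambda)v$. The remainder I expect to be the routine part: expanding the resolvents through \eqref{eq:R0lambda}, the $e^{-\lambda|x-z|}$ pieces contribute super‑polynomial decay in $|x-z|$ for $\lambda\geq\lambda_0\gtrsim1$ and are admissible outright, while the oscillatory pieces carry a phase $e^{i\lambda(|x-z|\pm|w-y|)}$ times an amplitude that, thanks to the extra $\lambda^{-2}$ from $\Gamma$, is $O_2(\lambda^{-3})$ multiplied by $\frac{1}{|x-z|\,|w-y|}$. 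Since $\widetilde\chi$ vanishes near $\lambda_0$, two integrations by parts in $\lambda$ produce no boundary terms and gain a factor $\la |x-z|\pm|w-y|\ra^{-2}$; comparing $|x-z|\pm|w-y|$ to $|x|\pm|y|$ dyadically (absorbing the $\la z\ra,\la w\ra$ weights into the decay of $v$), applying $\|v(\cdot)/|x-\cdot|\|_{L^2}\les\la x\ra^{-1}$ from Lemma~\ref{lem:L2 decay}, and using the absolute boundedness of $\Gamma$ as in \eqref{eq:QDQ int}, one arrives at a kernel bound $\les\frac{\la\log\la|x|\pm|y|\ra\ra}{\la x\ra\la y\ra\la|x|\pm|y|\ra^{2}}$, and Corollary~\ref{cor:log bd} (or the admissible‑kernel argument of Lemma~\ref{lem:kernels} if the logarithm is avoided) finishes it; this mirrors Lemmas~\ref{double trouble} and \ref{prop:error} almost verbatim.

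The main obstacle is the first Born term. Here the factor $vUv=V$ carries no $Q$‑orthogonality, so the $G^\pm/F^\pm$ device of Lemma~\ref{lem:G bds} — which traded $|x-z|$ for $|x|$ in the phase — is unavailable. Writing out both resolvents, the $\lambda$‑integral separates into pieces with phases $e^{i\lambda(|x-z|+|z-y|)}$, $e^{i\lambda(|x-z|-|z-y|)}$, and exponentially damped variants; the damped ones and the ``$+$'' piece are controlled by integrating by parts repeatedly in $\lambda$ (using $\widetilde\chi(\lambda_0)=0$ and $|x-z|+|z-y|\geq\tfrac12(|x-y|+|x-z|+|z-y|)$), so the real difficulty is the mixed‑sign piece $\int_0^\infty\lambda^{-1}\widetilde\chi(\lambda)e^{i\lambda(|x-z|-|z-y|)}\,d\lambda$, whose $\lambda$‑phase vanishes on the perpendicular bisector of $x$ and $y$. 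I expect this to be handled either by reducing, after the $z$‑integration against $V(z)/(|x-z||z-y|)$, to a singular‑integral estimate of Hilbert‑transform type in the spirit of Lemma~\ref{lem:Hilbert}, or — since at high energy $\widetilde\chi$ and $\lambda^{-1}$ are smooth on $[\lambda_0,\infty)$ and $V$ decays like $\la z\ra^{-5-}$ — by a longer but elementary decomposition according to the relative sizes of $|x|,|y|,|z|$, in either case producing an admissible kernel and hence boundedness on the full range $1\leq p\leq\infty$. This mixed‑sign, non‑$Q$‑protected contribution is the step I expect to require the most work.
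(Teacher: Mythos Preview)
Your decomposition actually coincides with the paper's: one checks that $v([M^+]^{-1}-U)v=-VR_V^+V$, so your ``remainder'' is exactly the tail $R^+VR_V^+V[R^+-R^-]$ that the paper obtains from the ordinary resolvent identity $R_V^+=R^+-R^+VR_V^+$. The substantive difference is in how this tail is bounded. The paper does not go through absolute $L^2$-bounds on $\Gamma$; instead Lemma~\ref{lem:hi2} applies the limiting absorption principle (Theorem~\ref{th:LAP}) directly to $R_V^+$, giving $\|\partial_\lambda^k R_V^+(\lambda^4)\|_{L^{2,\sigma}\to L^{2,-\sigma}}\les\lambda^{-3}$ for every $k$ once $\sigma>k+\tfrac12$. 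Because $R_V^+$ is flanked by the \emph{full} potential $V$ (decay $\beta>5$) rather than by $v$ (decay $\beta/2$), the weight $\sigma=\tfrac72+$ needed for $k=3$ is absorbed, and three integrations by parts produce the admissible kernel $\les\la x\ra^{-1}\la y\ra^{-1}\la|x|\pm|y|\ra^{-3}$. Your route --- bounding $\partial_\lambda^j\Gamma$ on unweighted $L^2$ --- forces the spectral weights onto $v$ alone; with $\beta>5$ this controls only $j\leq 2$ (the term $v\,\partial_\lambda^3R^+\,v$ would need $v(x)|x-y|^2v(y)$ to be Hilbert--Schmidt, i.e.\ $\beta>7$), so two integrations by parts give $\la|x|\pm|y|\ra^{-2}$, the $\epsilon=0$ case of Lemma~\ref{lem:kernels}, and you reach only $1<p<\infty$. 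That suffices for Theorem~\ref{thm:main} but not for the endpoints the Proposition asserts.

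For the Born term the paper's Lemma~\ref{lem:hi chiL} is neither a Hilbert-transform estimate nor a dyadic decomposition. It sets $\psi_L(\lambda)=\widetilde\chi(\lambda)\chi(\lambda/L)\lambda^{-1}$, bounds $|\widehat{\psi_L}(k)|\les\min(\la\log|k|\ra,|k|^{-3})$, and verifies directly that $\sup_x\int_{\R^6}|\widehat{\psi_L}(|x-z|\pm|z-y|)|\,|V(z)|\,|x-z|^{-1}|z-y|^{-1}\,dy\,dz$ is finite uniformly in $L$ and that the $A_L^\pm$ are Cauchy in the $L^\infty\to L^\infty$ operator norm; duality and the conjugation relation $W_-=\mathcal C^{-1}W_+\mathcal C$ then give the full range $1\leq p\leq\infty$. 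This device, adapted from D'Ancona--Fanelli \cite{DF}, treats both phases $|x-z|\pm|z-y|$ on the same footing and absorbs the stationary-phase difficulty you flagged into the logarithmic behaviour of $\widehat{\psi_L}$ near the origin.
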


The high energy follows by using the resolvent identity
$$
	R_V^+(\lambda^4)=R^+(\lambda^4)-R^+(\lambda^4)VR_V^+(\lambda)
$$
in the stationary representation, \eqref{eq:stat rep}, of the wave operator to bound
\begin{multline}\label{eqn:hi e}
	\int_0^\infty \lambda^3 \widetilde \chi(\lambda) R_V^+(\lambda^4)V(R^+-R^-)(\lambda^4)\, d\lambda
	=\int_0^\infty \lambda^3 \widetilde \chi(\lambda) R^+(\lambda^4)V(R^+-R^-)(\lambda^4)\, d\lambda
	\\-\int_0^\infty \lambda^3 \widetilde \chi(\lambda)R^+(\lambda^4)V R_V^+(\lambda^4)V(R^+-R^-)(\lambda^4)\, d\lambda.
\end{multline}
For the first integral, using \eqref{eq:R0lambda}, we need to show that
\begin{align}\label{eqn:hi trouble}
	A_L^\pm:=\int_{0}^\infty \frac{\widetilde \chi(\lambda)\chi(\lambda/L) }{\lambda |x-z||z-w|}\bigg( e^{i\lambda(|x-z|\pm |z-y|)} 
	-e^{-\lambda |x-z|}(e^{\pm i\lambda |z-y|}) \bigg)
   \, d\lambda
\end{align}
converges in an appropriate sense.  Then 
$$
	\int_0^\infty \lambda^3 \widetilde \chi(\lambda) R^+(\lambda^4)V(R^+-R^-)(\lambda^4)\, d\lambda= \lim_{L\to\infty} (A_L^+-A_L^-).
$$

\begin{lemma}\label{lem:hi chiL}
	
	The operators $A_L^\pm$ are bounded uniformly on $L^\infty$ and converge uniformly to operators $A^\pm$ that are bounded on $L^\infty$.
	
\end{lemma}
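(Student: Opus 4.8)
The plan is to read $A_L^\pm$ as the operator with kernel $A_L^\pm(x,y)=c\int_{\R^3}V(z)\,\mathcal K_L^\pm(x,z,y)\,dz$ (the integration against $V(z)\,dz$ and the absolute constant $c$ being suppressed in \eqref{eqn:hi trouble}), where, with $s=s(x,z,y)=|x-z|\pm|z-y|$,
\[
\mathcal K_L^\pm(x,z,y)=\frac{1}{|x-z|\,|z-y|}\int_0^\infty\frac{\widetilde\chi(\lambda)\,\chi(\lambda/L)}{\lambda}\Big(e^{i\lambda s}-e^{-\lambda|x-z|}e^{\pm i\lambda|z-y|}\Big)\,d\lambda,
\]
and to split $\mathcal K_L^\pm$ into the oscillatory term ($e^{i\lambda s}$) and the damped term ($-e^{-\lambda|x-z|}e^{\pm i\lambda|z-y|}$). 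For the oscillatory term the heart of the argument is the $L$-uniform estimate $\big|\int_0^\infty \tfrac{\widetilde\chi(\lambda)\chi(\lambda/L)}{\lambda}e^{i\lambda s}\,d\lambda\big|\les\min\big(1+\log_+\tfrac1{|s|},\,\la s\ra^{-3}\big)$. I would get the first bound by splitting the $\lambda$-integral at $\lambda=|s|^{-1}$: on $[\lambda_0,|s|^{-1}]$ a trivial estimate gives $\les\log\tfrac1{\lambda_0|s|}$, and on $[|s|^{-1},\infty)$ one integration by parts gives $\les1$; the only case where a crude estimate over the whole range would produce a growing factor $\log(2L/\lambda_0)$ is $|s|\lesssim L^{-1}$, and there $\log_+(1/|s|)$ already dominates it. Repeated integration by parts gives the $\la s\ra^{-3}$ decay, the boundary terms vanishing because $\widetilde\chi$ is supported away from $0$ and $\chi(\cdot/L)$ away from $\infty$ (the $\chi(\cdot/L)$-derivatives contributing only $O(L^{-1})$). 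Writing $g(\rho)=1+|\log\rho|$ for $0<\rho\le1$ and $g(\rho)=\rho^{-3}$ for $\rho>1$, this gives $|\mathcal K_L^{\pm,\mathrm{osc}}(x,z,y)|\les g(|s|)(|x-z||z-y|)^{-1}$; since $g$ and $\rho\mapsto\rho g(\rho)$ lie in $L^1(0,\infty)$, evaluating $\int_{\R^3}\frac{g(|x-z|\pm|z-y|)}{|z-y|}\,dy$ in polar coordinates about $z$ produces $\les 1+|x-z|$, and then multiplying by $|x-z|^{-1}$ (locally $L^1$ in $\R^3$) and integrating $|V(z)|\,dz$ (using the decay of $V$) gives $\sup_x\int|A_L^{\pm,\mathrm{osc}}(x,y)|\,dy\les1$; the $x$-integral is treated the same way, so $A_L^{\pm,\mathrm{osc}}$ has admissible kernel, uniformly in $L$.

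For the damped term I would combine two $L$-uniform bounds for $\big|\int_0^\infty\tfrac{\widetilde\chi(\lambda)\chi(\lambda/L)}{\lambda}e^{-\lambda|x-z|}e^{\pm i\lambda|z-y|}\,d\lambda\big|$: directly it is at most $\int_{\lambda_0}^\infty\tfrac{e^{-\lambda|x-z|}}{\lambda}\,d\lambda$, which is $\les\log(2+|x-z|^{-1})$ near $x=z$ and decays exponentially for large $|x-z|$; and integrating by parts $N$ times against $e^{\pm i\lambda|z-y|}$, keeping the factor $e^{-\lambda|x-z|}$ (whose presence is what preserves exponential decay in $|x-z|$) and again with vanishing boundary terms, gives a bound $\les_N|z-y|^{-N}\widetilde\eta_N(|x-z|)$, where $\widetilde\eta_N$ has exponential decay at infinity and at worst a logarithmic singularity at $0$, so that $|x-z|^{-1}\widetilde\eta_N(|x-z|)\in L^1(\R^3)$ uniformly. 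Using the first bound when $|z-y|<1$ and the second with $N=4$ when $|z-y|\ge1$, together with $\int_{|z-y|<1}|z-y|^{-1}\,dy+\int_{|z-y|\ge1}|z-y|^{-4}\,dy<\infty$ and the decay of $V$, shows $A_L^{\pm,\mathrm{dmp}}$ also has admissible kernel uniformly in $L$. Hence $|A_L^\pm(x,y)|\le\Phi(x,y)$ for an admissible kernel $\Phi$ independent of $L$, and the Schur test gives the uniform $L^\infty$ (in fact $L^1\cap L^\infty$) boundedness claimed.

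For the convergence, $\chi(\lambda/L)\to1$ pointwise, so $\mathcal K_L^\pm(x,z,y)$ converges to the conditionally convergent integral $\mathcal K^\pm(x,z,y)$ (the same formula with $\chi(\lambda/L)$ removed) for every configuration with $s\ne0$ and $x\ne z$, hence for a.e.\ $z$; dominated convergence against $\Phi$ then gives $A_L^\pm(x,y)\to A^\pm(x,y)$ a.e., with $|A^\pm|\le\Phi$, so $A^\pm$ is bounded on $L^\infty$. Convergence in operator norm follows by rerunning the estimates above for $A_L^\pm-A_M^\pm$: its $\lambda$-integrand carries the factor $\chi(\lambda/L)-\chi(\lambda/M)$, supported on $\lambda\gtrsim\min(L,M)$, so the estimates gain a factor $\int_{\min(L,M)}^\infty\lambda^{-2}\,d\lambda\les\min(L,M)^{-1}$ and $\|A_L^\pm-A_M^\pm\|_{L^\infty\to L^\infty}\to0$; thus $(A_L^\pm)$ is Cauchy and converges uniformly to $A^\pm$.

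The main obstacle is the oscillatory term: since $\lambda^{-1}\widetilde\chi(\lambda)$ is not integrable on $(\lambda_0,\infty)$, there is no $L$-uniform bound by absolute values and one must genuinely use the cancellation in $\int\lambda^{-1}e^{i\lambda s}\,d\lambda$. The delicate point is recognizing that the only regime forcing a bound growing with $L$ is $|s|=O(1/L)$ — exactly where $\log_+(1/|s|)$ is already that large — so that the $L$-independent bound $1+\log_+(1/|s|)$ survives, and then checking that the resulting mild logarithmic singularity (at the point $x=y=z$ for the $+$ sign, and on the hypersurface $\{|x-z|=|z-y|\}$ for the $-$ sign) is still integrable after the $dz\,dy$ integration, which holds because $\rho\mapsto\rho g(\rho)\in L^1(0,\infty)$. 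Verifying that the boundary terms vanish and that the $\chi(\cdot/L)$-derivative terms are $O(L^{-1})$ in all the integrations by parts is the remaining routine bookkeeping.
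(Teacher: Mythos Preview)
Your proposal is correct and follows essentially the same route as the paper. Both split $A_L^\pm$ into the purely oscillatory piece and the damped piece, prove the key $L$-uniform bound
\[
\Big|\int_0^\infty \frac{\widetilde\chi(\lambda)\chi(\lambda/L)}{\lambda}\,e^{i\lambda s}\,d\lambda\Big|\ \les\ \min\big(\la\log|s|\ra,\ |s|^{-3}\big)
\]
by splitting at $\lambda\sim|s|^{-1}$ and integrating by parts, then evaluate the $y$-integral in polar coordinates about $z$ and close with the decay of $V$; the Cauchy property is obtained by rerunning the estimate on the difference. The only cosmetic differences are: the paper packages the $\lambda$-integral as $\widehat{\psi_L}(s)$ and records the sharper outcome $\int\frac{|\widehat{\psi_L}(|x-z|\pm|z-y|)|}{|z-y|}\,dy\les\la|x-z|\ra^{\mp1}$ (you state the weaker $\les 1+|x-z|$, which still suffices), and the paper obtains the $L^{-1}$ gain for the Cauchy step via the rescaled bound $|\widehat{\psi_L}-\widehat{\psi_M}|\les\min(\la\log(|s|L)\ra,(|s|L)^{-3})$ rather than the heuristic ``$\int_L^\infty\lambda^{-2}\,d\lambda$'' you sketch---but the mechanism is the same.
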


By duality arguments we only consider $p=\infty$.  The same argument applies to the operators $W_-$ and $W_\pm^*$, which implies that this operator is bounded on the full range of $p$.  Due to the similarity of the purely oscillatory portion of $R^\pm$ to the free resolvent for the one-dimensional Schr\"odinger free resolvents, the high energy argument in \cite{DF} has been adapted to control part of the fourth order operator in Lemma~\ref{lem:hi chiL}.

\begin{proof}
	
	Define $\psi_L(\lambda)=\frac{\widetilde \chi(\lambda) \chi_L(\lambda)}{\lambda}$ and let $S_\pm=|x-z| \pm  |y-z|$  The most concerning term is when we have only oscillations.  We need to understand (and bound in $L^\infty$)
	$$
		B_Lg =\int_0^\infty \int_{ \R^6} \psi_L(\lambda) e^{i\lambda S_\pm} \frac{V(z)}{|x-z||z-y|} g(y) \, dz\, dy\, d\lambda,
	$$
	where $g\in L^\infty$.  By Fubini, we may interchange the order of integration and seek to control
	$$
		B_Lg=\int_{\R^6} \widehat{\psi_L}(S_\pm) \frac{V(z)}{|x-z||z-y|} g(y) \, dz\, dy.
	$$
	We show that $B_Lg$ is a Cauchy sequence of operators on $L^\infty$, to that end we note some properties of $\widehat{\psi_L}$, namely that
	$$ 
		|\widehat{\psi_L}(k)| \les \min(   \la \log |k|\ra, | k|^{-3}).
	$$
	The first estimate, good for $L^{-1} \leq |k| \leq 1$, is obtained by breaking the interal into two pieces. The 
integral over $\lambda < |k|^{-1}$ is bounded by $\la \log |k|\ra$.  Then the integral over $|k|^{-1} < \lambda$ can be integrated by parts once to see that it is bounded by 1.  The second estimate follows from integrating by parts repeatedly.   Similar analysis shows that
	$$
		|\widehat{\psi_L}(k) - \widehat{\psi_M}(k)| \les \min(\la \log(|k|L)\ra,  (|k|L)^{-3}) \quad \text{for all } M > L.
	$$
	This can be seen by a change of variables $s=\lambda L$ and the previous analysis, noting that $1\les s\les \frac{M}{L}$.
	For any fixed choice of $x$ and $z$, we can write $y$ in polar coordinates centered at $z$ and estimate
	$$
		\int_{\R^3} \frac{|\widehat{\psi_L}(|x-z| \pm |z-y|)|}{|z-y|} dy =4\pi \int_0^\infty r\,|\widehat{\psi_L}(|x-z|\pm r)| \les \la |x-z|\ra^{\mp 1}.
	$$
	The most delicate piece is in the `-' case when $r\approx |x-z|$, one considers cases based on the size of $|\,|x-z|-r|$ and $\frac{1}{2}$.	
	That leads to the bound
	$$
		\int_{\R^6} \frac{|\widehat{\psi_L}(|x-z| \pm |z-y|) V(z)|}{|x-z| |z-y|} dy dz \les \la x\ra^{-(1 \pm 1)}.
	$$
	That is bounded uniformly in both $x$ and $L$, which shows that $B_L$ are bounded operators on $L^\infty(\R^3)$ uniformly in $L$.  Repeating the calculation with a difference between $B_L$ and $B_M$ leads to the estimate
	$$
		\int_{\R^3} \frac{|\widehat{\psi_L}(S) - \widehat{\psi_M}(S)|}{|z-y|} dy = \int_0^\infty r\, |\widehat{\psi_L}(|x-z|\pm r) - \widehat{\psi_M}(|x-z| \pm r)| \les L^{-1} \la |x-z|\ra^{\mp 1}.
	$$
	Since the integral over $z$ is the same as before, except with an additional factor of $L^{-1}$, the family of operators $B_L$ converges in norm as $L \to \infty$.

	The remaining term with $\exp(-\lambda |x-z|)e^{\pm i\lambda|z-y|}$ has an argument that is less delicate.  Utilizing the support of $\widetilde\chi(\lambda)$, repeated integration by parts shows that
	$$
		\Big| \int_0^\infty e^{\pm i\lambda|z-y|} \frac{e^{-\lambda|x-z|}\widetilde{\chi}(\lambda) \chi(\lambda/L)}{\lambda} d\lambda \Big| \les \frac{e^{-|x-z|/2}\la |x-z|\ra^3}{|x-z| \la |z-y|\ra^3}. 
	$$
	One is then left to evaluate the integral
	$$
		\int_{\R^6} \frac{e^{-|x-z|/2}\la |x-z|\ra^3 |V(z)|}{|x-z|^2 |z-y| \la |z-y|\ra^3} dy dz 
		\les \int_{\R^3} \frac{e^{-|x-z|/2}\la |x-z|\ra^3}{|x-z|} dz \les \la x\ra^{-\beta} \les 1
	$$	
	uniformly in $L$.  The bounds are independent of $L$, hence we may conclude that as $L\to\infty$ the bound $\|Ag\|\les  \|g\|_\infty$ holds by the dominated convergence theorem.  
\end{proof}

We use the limiting absorption principle for the second term in \eqref{eqn:hi e}.
\begin{theorem} \label{th:LAP}\cite[Theorem~2.23]{fsy} Let $|V(x)|\les \la x \ra ^{-k-1}$. Then for any $\sigma>k+1/2$, $\partial_z^k R_V(z) \in \mathcal{B}(L^{2,\sigma}(\R^d), L^{2,-\sigma}(\R^d))$ is continuous for $z \notin {0}\cup \Sigma$. Further, 
	\begin{align*} 
	\|\partial_z^k  R_V(z) \|_{L^{2,\sigma}(\R^d) \rar L^{2,-\sigma}(\R^d)} \les z^{-(3+3k)/4}. 
	\end{align*}
\end{theorem}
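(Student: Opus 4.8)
The plan is to deduce the perturbed limiting absorption principle from the free one by the classical resolvent–identity and analytic Fredholm argument, carrying throughout the extra factor $\frac{1}{2\lambda^2}$ produced by the factorization in \eqref{eq:R0lambda}. First, for the free operator: write $\Delta^2-\lambda^4=(-\Delta-\lambda^2)(-\Delta+\lambda^2)$, so that \eqref{eq:R0lambda} presents $R_0^\pm(\lambda^4)$ as $\frac{1}{2\lambda^2}$ times the difference of the limiting Helmholtz resolvent $(-\Delta-\lambda^2\mp i0)^{-1}$ and the positive, exponentially decaying Yukawa resolvent $(-\Delta+\lambda^2)^{-1}$. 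For the Helmholtz piece I would invoke the Agmon--Kuroda estimates: the kernel of $\partial_\lambda^{\,j}(-\Delta-\lambda^2\mp i0)^{-1}$ equals $\frac{(\pm i|x-y|)^{j}e^{\pm i\lambda|x-y|}}{4\pi|x-y|}$, and since the phase $|x-y|$ has mixed Hessian of rank $d-1$, the attendant oscillatory-integral estimate gives $\|\partial_\lambda^{\,j}(-\Delta-\lambda^2\mp i0)^{-1}\|_{L^{2,\sigma}\to L^{2,-\sigma}}\les\langle\lambda\rangle^{-1}$ for $\sigma>j+\tfrac12$, together with norm-continuity in $\lambda$ on $(0,\infty)$. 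The Yukawa resolvent and all its $\lambda$-derivatives are $\les\lambda^{-2}$ between any weighted pair (elementary, using the decay of $e^{-\lambda|x-y|}$ and Lemma~\ref{lem:L2 decay}), hence are lower order. Multiplying by $\frac{1}{2\lambda^2}$, rewriting $\partial_z=\frac{1}{4\lambda^3}\partial_\lambda$, and bookkeeping powers of $\lambda$ with $z=\lambda^4$ then yields, for $\sigma>k+\tfrac12$, norm-continuity of $z\mapsto\partial_z^{\,k}R_0^\pm(z)\in\mathcal B(L^{2,\sigma},L^{2,-\sigma})$ on $(0,\infty)$ and the bound $\|\partial_z^{\,k}R_0^\pm(z)\|\les z^{-(3+3k)/4}$.

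Next, adding $V$. Since $|V(x)|\les\langle x\rangle^{-k-1}$ and $V(x)\to0$, the symmetrized operator $vR_0^\pm(z)v$ (with $v=|V|^{1/2}$) is compact on $L^2(\R^3)$ by Step~1 and Rellich's criterion, and depends norm-continuously on $z\in(0,\infty)$; equivalently $I+VR_0^\pm(z)$ is Fredholm of index zero on the appropriate weighted space. From the resolvent identity one has $R_V^\pm(z)=R_0^\pm(z)\bigl(I+VR_0^\pm(z)\bigr)^{-1}$, so everything reduces to invertibility of $I+VR_0^\pm(z)$ off $\{0\}\cup\Sigma$. By analytic Fredholm theory the exceptional set is discrete in $(0,\infty)$; and if $\bigl(I+VR_0^\pm(z)\bigr)f=0$ with $f\ne0$, then $u=R_0^\pm(z)f$ is an outgoing/incoming distributional solution of $(\Delta^2+V-z)u=0$ lying in a space of mild growth. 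The absence of positive-energy resonances — an Agmon--Rellich unique-continuation-at-infinity argument exploiting the explicit oscillatory form of $R_0^\pm(z)$ together with the decay of $V$ — forces $u\in L^2(\R^3)$, whence $z$ would be an embedded eigenvalue, i.e.\ $z\in\Sigma$, a contradiction. Thus $I+VR_0^\pm(z)$ is invertible for $z\notin\{0\}\cup\Sigma$, with locally bounded inverse, and for $z$ large (where $\|VR_0^\pm(z)\|<1$) a convergent Neumann series makes all constants uniform.

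Finally, the derivative bounds: differentiating $R_V^\pm=R_0^\pm(I+VR_0^\pm)^{-1}$ by the Leibniz rule, and using $\partial_z(I+K)^{-1}=-(I+K)^{-1}(\partial_zK)(I+K)^{-1}$ and its iterates, one writes $\partial_z^{\,k}R_V^\pm(z)$ as a finite sum of products of $\partial_z^{\,i}R_0^\pm(z)$ ($i\le k$) and factors $(I+VR_0^\pm(z))^{-1}$ and $\partial_z^{\,i}(VR_0^\pm(z))$; feeding in the free bounds of Step~1 (noting that the $\langle x\rangle^{-k-1}$ decay of $V$ returns an $L^{2,-\sigma}$ output to an $L^{2,\sigma}$ input at each junction) together with the local boundedness and continuity from Step~2 gives the claimed continuity of $z\mapsto\partial_z^{\,k}R_V^\pm(z)$ on $(0,\infty)\setminus\Sigma$ and the bound $\|\partial_z^{\,k}R_V^\pm(z)\|_{L^{2,\sigma}\to L^{2,-\sigma}}\les z^{-(3+3k)/4}$ uniformly for $z$ bounded away from $0$ and $\Sigma$. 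I expect the main obstacle to be the invertibility step in Step~2: verifying that $vR_0^\pm(z)v$ is genuinely compact with only the decay $\langle x\rangle^{-k-1}$ and weight $\sigma>k+\tfrac12$ (the $\frac{1}{2\lambda^2}$ prefactor makes the region $\lambda\to0$ delicate), and above all excluding $-1\in\sigma\!\left(VR_0^\pm(z)\right)$ at positive energies — that is, upgrading a putative kernel element $R_0^\pm(z)f$ to a bona fide $L^2$-eigenfunction of $H$, the point where the hypothesis $z\notin\Sigma$ and the absence of embedded resonances (for which one may appeal to the Rellich-type arguments underlying \cite{soffernew}, adapted to $\Delta^2$) are essential.
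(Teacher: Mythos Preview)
The paper does not prove this statement: Theorem~\ref{th:LAP} is quoted verbatim from \cite[Theorem~2.23]{fsy} and is used as a black box in the high-energy argument. So there is no proof in the present paper to compare your proposal against.

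That said, your outline is the standard route to a perturbed limiting absorption principle and is in the spirit of the argument in \cite{fsy}: free Agmon--Kuroda estimates plus the $\frac{1}{2\lambda^2}$ prefactor for the biharmonic resolvent, then resolvent identity and analytic Fredholm theory, then Leibniz for the derivatives. Two small cautions. First, in Step~1 you state the Agmon bound in its symmetric form ($\sigma>j+\tfrac12$ on both sides); to make the bookkeeping in Step~3 close with only $|V(x)|\les\la x\ra^{-k-1}$ you will want the asymmetric version $\|\partial_\lambda^{\,j}R_{\rm Schr}^\pm(\lambda^2)\|_{L^{2,s}\to L^{2,-s'}}\les\lambda^{-1}$ for $s,s'>\tfrac12$, $s+s'>j+1$, so that the weight absorbed by each factor of $V$ can be tuned to the derivatives landing on the adjacent free resolvents. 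Second, your appeal to ``Rellich's criterion'' for compactness of $vR_0^\pm(z)v$ is a slight misnomer; the compactness is most cleanly seen from the explicit kernel (locally Hilbert--Schmidt, with uniform smallness at infinity coming from the decay of $v$). The genuine obstacle you flag---ruling out positive-energy resonances so that a nontrivial kernel element would force an embedded $L^2$-eigenvalue---is exactly the point where the hypothesis $z\notin\Sigma$ enters, and \cite{fsy} handles this via an Agmon-type bootstrap; your identification of this as the crux is correct.
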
	
The major consequence of the limiting absorption principle that we use in our argument are the operator bounds:
$$
	\|\partial_\lambda^k  R_V(\lambda^4) \|_{L^{2,\sigma}  \rar L^{2,-\sigma} } \les \lambda^{-3} \quad \text{for each } k\geq 0, \quad\text{provided } \sigma>k+\f12.
$$

\begin{lemma}\label{lem:hi2}
	
	Under the hypotheses of Theorem~\ref{thm:main}, the operator defined by
	$$
		\int_0^\infty \lambda^3 \widetilde \chi(\lambda)R^+(\lambda^4)V R_V^+(\lambda^4)V(R^+-R^-)(\lambda^4)\, d\lambda
	$$
	has an admissible kernel and hence
	extends to a bounded operator on $L^p(\R^3)$ for $1\leq p\leq \infty$.
	
\end{lemma}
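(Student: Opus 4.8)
The plan is to prove directly that the operator has an admissible kernel; boundedness on every $L^p(\R^3)$, $1\leq p\leq\infty$, then follows from the Schur test. Write the kernel as
\[
K(x,y)=c\int_0^\infty \lambda^3\widetilde\chi(\lambda)\iint_{\R^6} R^+(\lambda^4)(x,z_1)\,V(z_1)\,R_V^+(\lambda^4)(z_1,z_2)\,V(z_2)\,[R^+-R^-](\lambda^4)(z_2,y)\,dz_1\,dz_2\,d\lambda,
\]
and expand the two free resolvents via \eqref{eq:R0lambda}: the $e^{-\lambda|z_2-y|}$ terms cancel in $R^+-R^-$, leaving $e^{\pm i\lambda|z_2-y|}$, while the first resolvent splits into an oscillatory term carrying $e^{i\lambda|x-z_1|}$ and an exponentially decaying term carrying $e^{-\lambda|x-z_1|}$, for four combinations in all. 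The key structural point is that $R_V^+(\lambda^4)$ should be touched only through the bilinear pairing $\iint f_1(z_1)R_V^+(\lambda^4)(z_1,z_2)f_2(z_2)\,dz_1dz_2=\la \overline{f_1},R_V^+(\lambda^4)f_2\ra$ with $f_1(z_1)=R^+(\lambda^4)(x,z_1)V(z_1)$ and $f_2(z_2)=V(z_2)[R^+-R^-](\lambda^4)(z_2,y)$, so that the limiting absorption bounds $\|\partial_\lambda^k R_V^+(\lambda^4)\|_{L^{2,\sigma}\to L^{2,-\sigma}}\les \lambda^{-3}$ (valid when $\sigma>k+\f12$, and applicable on $\mathrm{supp}\,\widetilde\chi$ since $\lambda\geq\lambda_0>0$ and there are no embedded eigenvalues) are available once $\lambda$–derivatives are distributed among the three factors.

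First I would record the crude bound. By Lemma~\ref{lem:L2 decay}, $\|f_1\|_{L^{2,\sigma}}\les\lambda^{-2}\la x\ra^{-1}$ and $\|f_2\|_{L^{2,\sigma}}\les\lambda^{-2}\la y\ra^{-1}$ provided $\beta-\sigma>\f32$, so without any integration by parts
\[
|K(x,y)|\les \la x\ra^{-1}\la y\ra^{-1}\int_{\lambda_0}^\infty \lambda^{3}\cdot\lambda^{-2}\cdot\lambda^{-3}\cdot\lambda^{-2}\,d\lambda\les \la x\ra^{-1}\la y\ra^{-1}.
\]
This already controls the region $|x|\approx|y|$, but is not admissible on its own, so additional decay must be extracted away from that region.

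To that end I would integrate by parts in $\lambda$. For the two fully oscillatory combinations, factor $e^{i\lambda|x|}$ out of $e^{i\lambda|x-z_1|}$ and $e^{\pm i\lambda|y|}$ out of $e^{\pm i\lambda|z_2-y|}$, so the integrand becomes $e^{i\lambda(|x|\pm|y|)}$ times an amplitude whose residual dependence on $z_1,z_2$ enters only through the slowly varying phases $e^{i\lambda(|x-z_1|-|x|)}$ and $e^{\pm i\lambda(|z_2-y|-|y|)}$. On the set $\la|x|\pm|y|\ra\geq 1$, integrate by parts $N$ times ($N=4$ suffices), gaining a factor $(|x|\pm|y|)^{-N}$; each derivative lands on $\lambda^3\widetilde\chi(\lambda)$ (harmless, as $\widetilde\chi'=-\chi'$ is compactly supported and yields still faster decay), on $R_V^+(\lambda^4)$ (handled by the limiting absorption principle with $\sigma$ chosen large enough for that term), or on a slowly varying phase, where it produces a factor $O(\la z_1\ra)$ or $O(\la z_2\ra)$ absorbed by $|V(z_j)|\les\la z_j\ra^{-\beta}$, and the accumulated negative power of $\lambda$ stays integrable at infinity. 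Combining with the crude bound on $\la|x|\pm|y|\ra<1$ yields $|K(x,y)|\les\la x\ra^{-1}\la y\ra^{-1}\la|x|\pm|y|\ra^{-N}$ for each sign, which is admissible: directly for the $+$ sign (split the $y$–integral at $|y|=2|x|$), and by Lemma~\ref{lem:kernels} with $\epsilon=N-2>0$ for the $-$ sign. The two combinations carrying $e^{-\lambda|x-z_1|}$ are handled the same way but more cheaply: since $\lambda\geq\lambda_0$ on $\mathrm{supp}\,\widetilde\chi$ we have $e^{-\lambda|x-z_1|}\leq e^{-\lambda_0|x-z_1|}$, and this exponential spatial decay together with the decay of $V$ upgrades $\|f_1\|_{L^{2,\sigma}}$ to $\les\lambda^{-2}\la x\ra^{-M}$ for any fixed $M$; the $z_1$–part carries no $\lambda$–oscillation, so the only phase is $e^{\pm i\lambda|y|}$, and $N$ integrations by parts on $|y|\geq1$ improve the crude bound $\la x\ra^{-M}\la y\ra^{-1}$ to $\la x\ra^{-M}\la y\ra^{-N}$, admissible for $M,N>3$.

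The hard part is purely bookkeeping: one must verify that each of the finitely many terms generated by the Leibniz rule in the integration by parts has its accumulated $\la z_j\ra$–weights absorbed by the decay of $V$ while simultaneously meeting the condition $\sigma>k+\f12$ demanded by the limiting absorption principle for the number $k$ of derivatives landing on $R_V^+(\lambda^4)$—which may force a different choice of the weight exponent $\sigma$ for different terms, all comfortably within the hypothesis $\beta>9$—and that in no term does the negative power of $\lambda$ arising from the two factors of $R^\pm(\lambda^4)$ together with the differentiation of the amplitude spoil convergence of the $\lambda$–integral at infinity.
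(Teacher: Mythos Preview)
Your proposal is correct and follows essentially the same strategy as the paper: a crude bound via the limiting absorption principle and Lemma~\ref{lem:L2 decay}, then integration by parts against the extracted phase $e^{i\lambda(|x|\pm|y|)}$ to gain the factor $\la|x|\pm|y|\ra^{-N}$, and finally Lemma~\ref{lem:kernels}. The only cosmetic differences are that the paper keeps the oscillatory and exponentially decaying pieces of $R^+(\lambda^4)$ together as the single factor $\frac{1-e^{(-1-i)\lambda|x-z|}}{|x-z|}$ (whose $\lambda$-derivatives are controlled by \eqref{eq:derivs}) rather than splitting them, and uses $N=3$ integrations by parts, which already suffices for admissibility and yields the sharper requirement $|V(z)|\les\la z\ra^{-5-}$ stated in Proposition~\ref{prop:hi bdd}.
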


\begin{proof}
	
	We first consider the bound taken by taking the absolute value of the integrand.  For this we use \eqref{eq:R0lambda} to make the crude bound 
	$$
		|R^\pm (\lambda^4)(x,y)|\les \frac{1}{\lambda^2 |x-y|}.
	$$
	Then, we may use the limiting absorption principle to see
	\begin{multline*}
		\bigg|\int_0^\infty \lambda^3 \widetilde \chi(\lambda)R^+(\lambda^4)V R_V^+(\lambda^4)V(R^+-R^-)(\lambda^4)\, d\lambda \bigg|\\
		\les \int_1^\infty \lambda^3 \|R^+(\lambda^4)V\|_{L^{2,\f12+}}
		\|  R_V(\lambda^4) \|_{L^{2,\f12+}  \rar L^{2,-\f12-}}\|VR^\pm(\lambda^4) \|_{L^{2,\f12+}}\, d\lambda\\
		\les \int_1^\infty \lambda^{-4} \bigg\| \frac{|V(z)|\la z\ra^{\f12+}}{|x-z|} \bigg\|_{L^2_z}\bigg\| \frac{|V(w)|\la w\ra^{\f12+}}{|w-y|} \bigg\|_{L^2_w}\, d\lambda \les \frac{1}{\la x\ra \la y \ra}.
	\end{multline*}
	Here we used the first claim in Lemma~\ref{lem:L2 decay} to push forward the decay in $x$ and $y$.

When $|x| \pm |y| \gtrsim 1$, there is much to be gained from integration by parts. After three integrations by parts the kernel will be bounded by
$$
	\frac{1}{ (|x|\pm |y|)^3}\bigg|  \int_0^\infty  
	\partial_\lambda^3 \bigg( \lambda^{-1} \widetilde\chi(\lambda)\mathcal E_{x,y}^\pm (\lambda) \bigg) \, d\lambda
	\bigg|,
$$
where  
\begin{multline}\label{eq:Exy defn2}
	\mathcal E_{x,y}^\pm (\lambda)=e^{i\lambda ( |x-z|-|x| \pm (|y|- |y-w|) )}\\ 
	\times\bigg(\frac{1-e^{(-1+i)\lambda |x-z|}}{|x-z|}\bigg)V(z) R_V(\lambda^4)(z,w) V(w) \bigg(\frac{1-e^{(-1\pm i)\lambda |z-w|}}{|z-w|}\bigg).
\end{multline}
Let $k_j\in \mathbb N\cup \{0\}$ with $k_1+k_2+k_3+k_4=3$, then the integral is controlled by
\begin{multline*}
	\int_1^\infty  \bigg\| \partial_\lambda^{k_1} e^{i\lambda ( |x-z|-|x|) } \bigg(\frac{1-e^{(-1+i)\lambda |x-z|}}{|x-z|}\bigg)V(z) \la z\ra^{k_2+\f12+}\bigg\|_{L^2_z}\
	\bigg\| \partial_\lambda^{k_2} R_V(\lambda^4)   \bigg\|_{L^{2,k_2+\f12+}_w\to L^{2,-k_2-\f12-}_z}\\
	\bigg\| \partial_\lambda^{k_3} e^{i\lambda (   \pm (|y|- |y-w|) )}V(w) \bigg(\frac{1-e^{(-1\pm i)\lambda |z-w|}}{|z-w|}\bigg) \la w\ra^{k_2+\f12+}	\bigg\|_{L^2_w} \lambda^{-1-k_4}\, d\lambda\\
	\les \int_1^\infty \lambda^{-4} \bigg\| \frac{|V(z)|\la z\ra^{\f72+}}{|x-z|} \bigg\|_{L^2_z}\bigg\| \frac{|V(w)|\la w\ra^{\f72+}}{|w-y|} \bigg\|_{L^2_w}\les \frac{1}{\la x\ra \la y \ra}.
\end{multline*}
Provided $|V(z)|\les \la z\ra^{- 5 -}$, the second bound in Lemma~\ref{lem:L2 decay} may be used at the end.  
Combining this with the previous bound shows that the kernel of the operator is dominated by
$$
\frac{1}{\la |x|\pm |y|\ra^3 \la x\ra \la y \ra},
$$
and hence is admissible by Lemma~\ref{lem:kernels}.

\end{proof}


\begin{proof}[Proof of Proposition~\ref{prop:hi bdd}]
	
	Using the representation \eqref{eqn:hi e}, the claim follows from Lemmas~\ref{lem:hi chiL} and \ref{lem:hi2}.
	
\end{proof}

Now we can assemble all of the pieces of the main Theorem.

\begin{proof}[Proof of Theorem~\ref{thm:main}]

Using the stationary representation, \eqref{eq:stat rep}, and noting that $\chi(\lambda)+\widetilde \chi(\lambda)=1$ on $[0,\infty)$, we have
$$
	W_+u=u+W_+^Lu+W_+^Hu
$$
The identity operator is bounded on $L^p$,
Theorem~\ref{thm:low e} and Proposition~\ref{prop:hi bdd} establish boundedness of the low and energy contributions respectively.

\end{proof}

\section{Proof of Proposition~\ref{prop:Minv}}\label{sec:Minv}

We include the proof of Proposition~\ref{prop:Minv} here for completeness.  Similar expansions, using the Jensen-Nenciu method \cite{JN}, are obtained by Erd\smash{\u{o}}gan, Toprak and the second author in \cite{egt} and the authors in \cite{GG4}.  Neither of these is quite sufficient for our needs.  We begin by collecting useful results.

	\begin{lemma}\label{lem:M_exp} For  $0<\lambda<1$ define  $M^{+}(\lambda) = U + v R^{+}(\lambda^4) v $. Let $P=v\langle \cdot, v\rangle \|V\|_1^{-1}$ denote the orthogonal projection onto the span of $v$.  We have
		\begin{align}  
		\label{Mexp} M^{+}(\lambda)&= A^{+}(\lambda)  +\lambda \widetilde M_1^+ +\widetilde M_2^+(\lambda),\\ 
		\label{Apm}
		A^{+}(\lambda)&= \frac{\|V\|_1 a^+}{\lambda} P+T,
		\end{align} 
		where $T:=U+vG_0v$,  and
		$$
			\sum_{j=0}^2 \lambda^{ j} \|\,|\partial_\lambda^j \widetilde M_{2 }^+(\lambda)|\,\|_{L^2\to L^2} \les \lambda^2,
		$$
		provided that $v(x)\les \la x \ra^{\f92-}$. 
	\end{lemma}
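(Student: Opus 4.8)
The plan is to expand $M^+(\lambda) = U + vR^+(\lambda^4)v$ directly from the explicit kernel formula \eqref{eq:R0lambda}, which writes $R^+(\lambda^4)(x,y)$ as $\frac{1}{2\lambda^2}\big(\frac{e^{i\lambda|x-y|}}{4\pi|x-y|} - \frac{e^{-\lambda|x-y|}}{4\pi|x-y|}\big)$. First I would perform a Taylor expansion in $\lambda$ of the two exponentials inside the bracket. Writing $e^{i\lambda r} = 1 + i\lambda r - \tfrac{\lambda^2 r^2}{2} - \tfrac{i\lambda^3 r^3}{6} + \cdots$ and $e^{-\lambda r} = 1 - \lambda r + \tfrac{\lambda^2 r^2}{2} - \tfrac{\lambda^3 r^3}{6} + \cdots$, the difference is $e^{i\lambda r} - e^{-\lambda r} = (1+i)\lambda r - \tfrac{i\lambda^3 r^3}{6} + \tfrac{\lambda^3 r^3}{6} + \cdots$, so dividing by $2\lambda^2 \cdot 4\pi r$ one gets a leading term $\frac{(1+i)\lambda}{8\pi\lambda^2} = \frac{a^+}{\lambda}$ (with $a^+ = \frac{1+i}{8\pi}$), which is $r$-independent and therefore produces the rank-one term $\frac{\|V\|_1 a^+}{\lambda}P$ after sandwiching with $v$. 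The next term is $\lambda$-independent and $r$-independent, giving $G_0$, so $vG_0 v$ together with $U$ forms $T$. The $O(\lambda)$ term gives $\widetilde M_1^+$, and the remainder after subtracting these is $\widetilde M_2^+(\lambda)$, which should be $O(\lambda^2)$ with the stated derivative bounds.

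The key steps, in order: (1) record the exact pointwise expansion of the scalar kernel $R^+(\lambda^4)(x,y) - \frac{a^+}{\lambda} - G_0(x,y) - \lambda G_1(x,y)$ with an integral-form remainder, and compute $|\partial_\lambda^j$(remainder)$|$; here the key gain is that the terms in the remainder carry extra powers of $\lambda r$, so each $\lambda$-derivative costs at most a power of $r$ and the remainder kernel is bounded by something like $\lambda^{2-j} r^{1+j}$ on the oscillatory side and by $\lambda^{2-j} r^{2+j} e^{-\lambda r}$ (hence still $\lesssim \lambda^{2-j} r^{1+j}$ after absorbing the decaying exponential) on the other side; (2) translate these pointwise kernel bounds into Hilbert–Schmidt bounds for the operators $v(\cdot)(\text{remainder})(\cdot)v(\cdot)$ — since the remainder kernel after derivatives is controlled by $\lambda^{2-j}|x-y|^{1+j}$ up to harmless factors, one needs $v(x)\langle x\rangle^{1+j} \in L^2$, i.e. $v(x)\lesssim\langle x\rangle^{-(5/2+j)-}$, and for $j=2$ this is $v\lesssim \langle x\rangle^{-9/2-}$, matching the hypothesis; (3) verify the leading operators are absolutely bounded: $P$ is rank one, $U$ is a bounded multiplication operator, and $vG_0 v$ has kernel $\frac{v(x)v(y)}{8\pi}$ times a constant (or more precisely the $\lambda$-independent part of the kernel), whose absolute value is again Hilbert–Schmidt under the decay hypothesis, and likewise $\widetilde M_1^+$ has an absolutely bounded kernel; (4) assemble the pieces to read off \eqref{Mexp} and \eqref{Apm}, and confirm the claimed estimate on $\widetilde M_2^+$.

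I expect the main obstacle to be bookkeeping the remainder estimate uniformly in $\lambda$ across the two qualitatively different contributions: the purely oscillatory piece $\frac{e^{i\lambda r}}{8\pi\lambda^2 r}$, where one must extract the cancellation of the $\lambda^{-1}$ and $\lambda^0$ coefficients by hand (the remainder is genuinely $\sum_{k\geq 3}\frac{(i\lambda r)^k}{8\pi\lambda^2 r \cdot k!}$, which one bounds by $\lambda r^2 \cdot \min(1,\lambda r)$ with controlled $\lambda$-derivatives), versus the exponentially damped piece $\frac{e^{-\lambda r}}{8\pi\lambda^2 r}$, where the subtraction of the first three Taylor coefficients leaves $O(\lambda r^2)$ but one should be careful that differentiating in $\lambda$ can either hit the polynomial remainder or the factor $e^{-\lambda r}$; in the latter case each derivative brings down a factor of $r$ but the exponential decay $e^{-\lambda r}$ keeps $\lambda^j r^j e^{-\lambda r}\lesssim 1$, so the net effect is still division by $\lambda$ as claimed in the $O_2$ notation. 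Once these two remainder bounds are in hand, converting to $L^2\to L^2$ operator norm bounds via the Hilbert–Schmidt norm (and the decay of $v$) is routine, and the identification of $T$, $\widetilde M_1^+$ is immediate from matching Taylor coefficients. One should also note for later use (as flagged after the Proposition) that $QD_0Q$'s absolute boundedness will be inherited from this expansion together with the Jensen–Nenciu inversion, but that is handled in the subsequent argument rather than here.
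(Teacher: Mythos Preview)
Your overall strategy matches the paper's: Taylor-expand the explicit kernel \eqref{eq:R0lambda}, read off the $\lambda^{-1}$, $\lambda^0$, and $\lambda^1$ coefficients as $\frac{a^+}{\lambda}$, $G_0$, and $a_1^+\lambda G_1$, and bound the remainder and its first two $\lambda$-derivatives via a Hilbert--Schmidt estimate. The identification of the leading operators and the final decay requirement $v\lesssim\la x\ra^{-9/2-}$ are correct.

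There is, however, a concrete error in your claimed pointwise remainder bound. You assert $|\partial_\lambda^j(\text{remainder})|\lesssim \lambda^{2-j}|x-y|^{1+j}$, but this fails once $\lambda|x-y|\gtrsim 1$: the remainder $\widetilde M_2^+$ equals $R^+(\lambda^4)-\frac{a^+}{\lambda}-G_0-a_1^+\lambda G_1$, and for large $\lambda r$ the subtracted polynomial dominates. In particular the term $a_1^+\lambda r^2$ alone already exceeds your $j=0$ bound $\lambda^2 r$ whenever $r>\lambda$, and its $\lambda$-derivative $a_1^+r^2$ exceeds your $j=1$ bound $\lambda r^2$ for all $\lambda<1$. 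Your split into ``oscillatory vs.\ damped'' pieces does not repair this: once you subtract polynomial terms from $\frac{e^{-\lambda r}}{8\pi\lambda^2 r}$, the difference no longer enjoys exponential decay (the polynomial does not decay), so the bound $\lambda^{2-j}r^{2+j}e^{-\lambda r}$ you quote is also false for large $\lambda r$.

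The correct uniform bound is $O_2(\lambda^2|x-y|^3)$, i.e.\ $|\partial_\lambda^j(\text{remainder})|\lesssim \lambda^{2-j}|x-y|^3$ for $j=0,1,2$, and the paper obtains it by splitting on the size of $\lambda|x-y|$ rather than on oscillatory vs.\ damped: for $\lambda|x-y|<1$ the Taylor expansion gives the bound directly, while for $\lambda|x-y|\geq 1$ one bounds $\partial_\lambda^j R^+$ crudely (by $\lambda^{-1}$ for $j=0$ and $|x-y|^{j-1}\lambda^{-2}$ for $j\geq 1$) and then freely multiplies by powers of $\lambda|x-y|\geq 1$ to reach $\lambda^{2-j}|x-y|^3$; the subtracted polynomial terms are handled the same way. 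With this corrected bound in hand, the Hilbert--Schmidt step requires exactly $v(x)|x-y|^3v(y)\in L^2(\R^6)$, which is the source of the hypothesis $v\lesssim\la x\ra^{-9/2-}$ for all three values of $j$, not just $j=2$.
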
 	

\begin{proof}
	
	This is essentially a subcase of the results in Lemma~4.1 in \cite{egt}.  For completeness, we verify that the third derivative of the error term is bounded as claimed.
	
	Nothing that when $\lambda|x-y|< 1$, using the series expansion of \eqref{eq:R0lambda} one obtains
	$$
		M^+(\lambda) (x,y)= U+vG_0v+ v\frac{a^{+}}{\lambda}v   + a_1^{+}  \lambda vG_1v+O_2(\lambda^2 |x-y|^3),\,\,\,\,\lambda|x-y|<1,
	$$
	where $G_0(x,y)=-\frac{|x-y|}{8\pi}$, $G_1=|x-y|^2$ and $a^+, a_1^+$ are non-zero constants whose exact values are unimportant for our purposes.
	
	When $\lambda|x-y|\geq 1$, we instead note that
	$$
	|\partial_\lambda ^j R^+ (\lambda^4)(x,y)|\les \left\{ \begin{array}{ll} \lambda^{-1} & j=0\\
	\frac{|x-y|^{j-1}}{\lambda^2} & j\geq 1 \end{array}
	\right.=O_2(\lambda^2 |x-y|^3).
	$$
	Since $\lambda |x-y|\geq 1$ we may freely multiply this bound by powers of $\lambda |x-y|$ as needed.  	So that
	\begin{align*}
		\widetilde M_2^+(\lambda)=vR^+(\lambda^4)v-\bigg(  vG_0v+ v\frac{a^{+}}{\lambda}v   + a_1^{+}  \lambda vG_1v \bigg)=O_2(\lambda^2 |x-y|^3).
	\end{align*}
	The decay rate on $v$ is required   to show ensure that $v(x)|x-y|^3v(y)$ is a Hilbert-Schmidt kernel.
	
\end{proof}	
 
An application of the Feshbach formula, see Lemma~4.5 of \cite{egt}, shows that
\begin{align} \label{A inverse}
(A^+(\lambda))^{-1}= QD_0Q + g^{+}(\lambda)  S, 
\end{align}
where $g^{+}(\lambda)=(\frac{ a^+ \|V\|_1}{\lambda} +c)^{-1} $ for some $c\in \R$, and $S$ is an absolutely bounded operator.
Noting that 
$$
	g^+(\lambda)=\frac{\lambda}{a^+\|V\|_1}-\frac{c\lambda^2}{(a^+\|V\|_1)^2}+O_3(\lambda^3),
$$  
we may write
$$
	A^+(\lambda)^{-1}= QD_0Q+\lambda A_1^++O_3(\lambda^2).
$$

The Proposition follows via a Neumann series expansion.
\begin{align*}
	(M^+(\lambda))^{-1}&=(A^{+}(\lambda)    +\lambda\widetilde  M_1^+ +\widetilde M_2^+(\lambda))^{-1}\\
	&=A^+(\lambda)^{-1}(\mathbbm 1  +\lambda \widetilde M_1^+A^+(\lambda)^{-1} +\widetilde M_2^+(\lambda)A^+(\lambda)^{-1})^{-1}\\
	&=QD_0Q+\lambda M_1+M_2(\lambda).
\end{align*}

\end{document}